\definecolor{maroon}{RGB}{133, 5, 63}
\definecolor{teal}{RGB}{0, 128, 96}
\definecolor{forestgreen}{RGB}{34, 139, 34}
\lstdefinelanguage{code}{
basicstyle=\small\ttfamily,
alsoletter=",
classoffset=1,
keywords={solve, differentiate, subs, sub, real_solutions, det, sum, max, count, conditional_count, map, filter, map_filter, union, zip, product, is_finite, is_successful, is_nonsingular, track, is_real, is_success, first, evaluate, flatten, bitmask_filter, accumulate, polynomial_interpolants, reverse, coefficients, vcat, solution_from_necklace, length, prod, compress, total_degree_start_solutions, degrees, variables, iterate, struct, collect, convert, stretched_cube, stretched_cubes, weight_vector, weight_vectors, perm_to_segments, perm_to_mixedcell, mixed_cell_iterator, rand_approx_unit, norm, fixed, polyhedral_system, reduce, findall, eachrow},
keywordstyle={\color{teal}},
classoffset=2,
morekeywords={@var, @time, for, end, if, while, else, begin},
keywordstyle={\color{maroon}},
classoffset=3,
morekeywords={using, function, return, const},
keywordstyle={\color{blue}},
classoffset=4,
morekeywords={julia, >},
keywordstyle={\color{forestgreen}},
xleftmargin=1.5cm,
xrightmargin=1em,
columns=fullflexible,
keepspaces=true,
}
\definecolor{fondo}{rgb}{0.898,0.996,0.898}
\author[P. Breiding]{Paul Breiding}
\address[P. Breiding]{Department of Mathematics, University of Osnabr\"uck, Germany (ORCID: 0000-0003-3747-9185)}
\email{pbreiding@uni-osnabrueck.de}
\author[T. Brysiewicz]{Taylor Brysiewicz}
\address[T. Brysiewicz]{Department of Mathematics, University of Western Ontario, London, Canada (ORCID: 0000-0003-4272-5934)}
\email{tbrysiew@uwo.ca}
\author[H. Friedman]{Hannah Friedman}
\address[H. Friedman]{Department of Mathematics, University of California, Berkeley, USA (ORCID: 0009-0007-7831-2636)}
\email{hannahfriedman@berkeley.edu}
\newcommand{\R}{{\mathbb{R}}}
\newcommand{\x}{\textbf{x}}
\newcommand{\mycomment}[1]{}
\renewcommand{\tilde}[1]{\widetilde{#1}}
\newcommand{\p}{\textbf{p}}
\theoremstyle{definition}
\newtheorem{theorem}{Theorem}[section]
\newtheorem{definition}[theorem]{Definition}
\newtheorem{lemma}[theorem]{Lemma}
\newtheorem{proposition}[theorem]{Proposition}
\newcommand{\mydef}[1]{{\color{blue}{#1}}}
\DeclareRobustCommand{\code}[1]{\lstinline!#1!}
\tikzstyle{vertex}=[circle, draw, inner sep=0pt, minimum size=6pt, fill=black]
\title{Homotopy Iterators}
\date{}
\begin{document}

\begin{abstract}
We introduce the concept of homotopy iterators for performing polynomial homotopy continuation tasks in a memory efficient manner. The main idea is to push forward an iterator for the start solutions of a homotopy via the function which tracks them along the homotopy. Doing so produces a representation of the target solutions, bypassing the need to hold all solutions in memory. We discuss several applications of this datatype ranging from solution counting to data compression.
\end{abstract}

\maketitle

\bigskip
\section{Introduction}
Algorithmic advances in computer algebra and numerical algebraic geometry 
have historically focused on making polynomial-system-solving \emph{faster}. 
This mission has been incredibly successful, culminating in recent developments of highly efficient polynomial homotopy continuation solvers  \cite{HOM4PS,HC, NAG4M2, PHC, Bertini}. As a result, the bottleneck of state-of-the-art software involves issues of \emph{memory}: personal computers lack the memory required to store billions of solutions. 

One observation underlying this article is that many computational tasks which require polynomial system solving do not, in fact, need all solutions to be stored simultaneously. Such tasks include
\begin{itemize}
 \item counting the number of solutions satisfying some condition,\smallskip
 \item accumulating a function of the solutions,\smallskip
 \item determining  a monodromy permutation,\smallskip
 \item finding the solution which optimizes some objective function, and\smallskip
 \item finding and sharing a particular subset of solutions. 
\end{itemize}
Therefore, building a data structure for solutions of systems of polynomial equations that allows the user to solve the above problems without storing all solutions is imperative. Such a data structure is given by an \mydef{iterator}. An iterator represents a list  via the ability to iterate through its elements \cite{Watt}. Familiar operations on lists, like pushing the list forward under a function or taking a subset satisfying certain conditions, may be performed instantaneously on the level of iterators. For example, one may push forward the start solutions of a homotopy with respect to the function which performs path-tracking on those start solutions. The output, which we call a \mydef{homotopy iterator}, represents the result of this process. The formal definition is given in \autoref{def_hom_iter}.

One contribution of this work is the user-friendly implementation of homotopy iterators in \code{HomotopyContinuation.jl} \cite{HC} v.2.15.1 (or higher) in \code{Julia}~\cite{julia}. Here is a small code example. 

\begin{lstlisting}[language=code]
using HomotopyContinuation
@var x y 
F = [x^2 + y - 1; x^2 + y^2 - 4]
I = solve(F; iterator_only = true)
\end{lstlisting}
The system \code{F} in this example has four zeros.  Without the flag  \code{iterator_only = true}, the \code{solve} function computes and return the list of the four zeros of \code{F}. Instead, setting the flag instantly returns an iterator \code{I} for the four solutions. Queries involving this solution set can be called on the level of iterators. For instance, one may ask  if there is a real zero: \code{Iterators.any(is_real, I)}.
This command iterates through~\code{I} and stops if one of the solutions is real. Throughout this process, the computer only ever holds a single zero of \code{F} in memory. We give more sophisticated examples in~\autoref{sec:implementation} of how to use iterator specific functions to reduce the memory and time requirements of some common tasks involving zeros of polynomial systems. 

Often, one desires a subset of the results represented by \code{I}, like those results which are finite, real, or nonsingular. Representing these subsets is easy using iterators: one simply filters the results based on which evaluate to \code{true} under a boolean-valued function $f$. For applications such as data compression, one may want to save the values of $f$ in a \mydef{bit-vector}, or \mydef{bitmask}, which may be transmitted at low memory cost. This is illustrated in \autoref{fig:homotopypic}. In our running example, $f$ is the function that returns \code{true} if and only if a solution is real. The two methods look as follows. 

\begin{lstlisting}[language=code]
J1 = Iterators.filter(is_real,I)
J2 = bitmask_filter(is_real, I)
\end{lstlisting}
The iterator \code{J1} is constructed instantly with no true computation involved, whereas \code{J2} is obtained by iterating through the list represented by \code{I} and attaching a bitmask \code{B} that encodes the desired subset.  Although \code{J1} and \code{J2} represent the same list, subsequent computations on \code{J2} are faster since the information about $f$ has been cached.

The expressivity of homotopy iterators goes well-beyond the ability to filter. Functionally, homotopy iterators may be used in \emph{any} setting in place of the solution set itself. From a user perspective, the difference is that homotopy iterators postpone all computation until the last moment, and do so in a way which traverses the solution set so that simultaneous storage of all solutions is \emph{never} required. 
This observation, that solution sets of polynomial systems can, and should, be represented via iterators, was first made in \cite{monodromyCoordinates}. There, an iterator is built for a solution set based on its construction via monodromy solving  \cite{Duff}. The present work builds on this idea, offering a much simpler approach when the solution set is constructed via homotopy.

\begin{figure}
\includegraphics[scale=0.5]{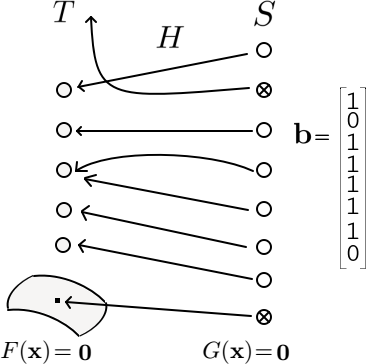}
\smallskip 

\caption{A visual depiction of a homotopy from a start system $ G(\textbf{x}) = 0$ with start solutions $S$ (represented as an iterator) to a target system $F(\textbf{x})=0$. The start solutions which correspond to finite isolated target solutions are encoded in the bit-vector $\textbf{b}$.} 
\label{fig:homotopypic}
\end{figure}

We describe homotopy continuation in \autoref{sec:polynomial_homotopy_continuation}. In \autoref{sec:iterators} we give the minimal necessary background regarding iterators and define how path-tracking can be used to push forward start solution iterators to target solution iterators. 
\autoref{sec:start_system_iterators} outlines how to produce start solution iterators in three important settings: total degree homotopies, polyhedral homotopies, and parameter homotopies. Finally, in \autoref{sec:implementation_applications_examples}, we describe our implementation in \code{HomotopyContinuation.jl} and showcase the power of our lazy-evaluation approach toward homotopy continuation through applications and examples.

\subsection*{Acknowledgements} PB is supported by Deutsche Forschungsgemeinschaft (DFG) -- Projektnr.\ 445466444. TB is supported by NSERC Discovery Grant RGPIN-2023-03551.

\bigskip
\section{Polynomial homotopy continuation}\label{sec:polynomial_homotopy_continuation}
\mydef{Polynomial homotopy continuation (PHC)} \cite{NAGbook} is a method designed to find all isolated complex solutions to a  polynomial system of the form
\begin{equation}
\label{eq:system}
F(x_1,\ldots,x_n) = F(\x) = \begin{bmatrix} f_1(\x) \\ f_2(\x) \\ \vdots \\ f_n(\x) \end{bmatrix} = \textbf{0}, \quad \quad f_1,\ldots,f_n \in \mathbb{C}[x_1,\ldots,x_n].
\end{equation}
This system is \mydef{square} since the number of equations equals the number of variables. PHC computes the finitely many isolated solutions to \eqref{eq:system} by constructing a \mydef{start system}  $G(\x)=\textbf{0}$ which ``looks like'' \eqref{eq:system} but is easy to solve. After finding an appropriate start system, PHC computes the isolated solutions to $F(\x)=\textbf{0}$ by connecting them to the solutions of $G(\x)=\textbf{0}$ via a \mydef{homotopy} $H(\x;t)$ such that $H(\x, 1) = G(\x)$ and  $H(\x, 0) = F(\x)$. 

For instance, the straight line from $F$ to $G$ gives the \mydef{straight-line homotopy}
 \begin{equation}
 \label{eq:homotopy}
 H(\x;t) = (1-t)F(\x) + \gamma \cdot t G(\x), \quad \quad \gamma \in \mathbb{C}^\times = \mathbb{C}-\{0\}.
 \end{equation}
   Another example is when $F=F(\x;\mathbf p)$ depends on $k$ parameters $\mathbf p\in\mathbb C^k$. Then, we can set up a \mydef{parameter homotopy} via 
  \begin{equation}
 \label{eq:homotopy2}
 H(\x;t) =  F(\x; (1-t)\mathbf p + t\mathbf q), \quad \quad \mathbf p, \mathbf q\in\mathbb C^k.
 \end{equation}
 We assume that $G(\x)$ is chosen appropriately, so that $t=1$ is a \textit{generic} value in the sense of the \textit{parameter continuation theorem} (see \cite{BOROVIK2025102373, MS1989}). All examples in this paper satisfy this condition which guarantees that the zeros of $H(\x;t)$ for $t=(0,1]$ are comprised of some number of smooth disjoint \mydef{homotopy paths}
 \[
 Z_t=\{z_1(t),\ldots,z_d(t)\}
 \]
 which connect the \mydef{start solutions} $S=Z_1$ of $G(\x) = H(\x;1) = \textbf{0}$ to the \mydef{target solutions} $T=Z_0$ of $F(\x) = H(\x;0) = \textbf{0}$. Implicitly, we have a map
$$
 f_H:S \to T,\quad 
 z_i(1) \mapsto z_i(0),
$$
 which may be evaluated numerically using standard path-tracking methods.

Homotopy paths may exhibit the following behavior at $t =0$:
 \begin{itemize}
 \item[]   {(a) Divergence:} A coordinate of a path $z_i(t)$ approaches $\infty$ as $t\to 0$.  \smallskip
 \item[] {(b) Path Collision:} Two paths collide, so that $z_i(0) = z_j(0)$.\smallskip
 \item[]   {(c) Positive Dimensionality:} A point $z_i(0)$  is not an isolated solution to $H(\x;0)=\textbf{0}$.\smallskip
 \end{itemize} 
\autoref{fig:homotopypic} illustrates each of these so-called ``endgame behaviors.'' The target solutions $z$ which do not exhibit any of these features are called \mydef{simple} and are characterized by the invertibility of the Jacobian $\textrm{Jac}(F)|_z$ of $F$ at $z$. Handling the non-simple target solutions is done computationally by~\mydef{endgame algorithms} \cite{NAGbook}.

If the number of target solutions equals the number of start solutions which must be tracked, then the homotopy is called \mydef{optimal}. Non-optimality is traditionally accounted for by the  paths which exhibit one of the three non-simple endgame behaviors. \autoref{fig:homotopypic} illustrates a non-optimal homotopy for which is the target system has three fewer solutions than the start system: one solution path diverges, one solution path is redundant, and one solution path approaches a non-isolated solution. If multiplicity is counted, the target system only has two fewer solutions that the start system.

\bigskip
\section{Iterators}
\label{sec:iterators}

Recall from the introduction that an \mydef{iterator} is an object which represents a set  
\begin{equation*}
A = \{a_1,\ldots,a_d\}
\end{equation*} 
via the ability to iterate through its elements \cite{Watt}. In our case, the iterator for the target solutions~$T$ (the zeros of $F$) is defined by an iterator \code{I} for the start solutions $S$ (the zeros of $G$) \emph{and} a homotopy $H$ from $F$ to~$G$. A different iterator for $S$ or homotopy may result in a different iterator for~$T$. 
\begin{definition}\label{def_hom_iter}
Given a homotopy $H$ which connects the start solutions $S$ to the target solutions $T$ and an iterator \code{I} for the start solutions, the pair $(H, \text{\code{I}})$, represents the  iterator $\text{\code{J}}  = f_H(\text{\code{I}})$ for $T$. We call $\text{\code{J}}$ a \mydef{homotopy iterator} for $T$. 
\end{definition}
In most cases, we will drop the formalism of this definition when the homotopy is clear from the context. In some cases, however, it is important to underline the role of the homotopy $H$ and we may describe the homotopy iterator by the pair $(H, \text{\code{I}})$. Alternatively, we may write $f_H(\text{\code{I}})$ to highlight that homotopy iterators are push-forwards of functions, as described in the next section.

An iterator implicitly represents the \emph{set} $A$ by an \emph{ordered set}, or list $a_1, \ldots, a_d$.
In particular, we assign to each element $a_i$ a \mydef{state} $i$.
In \code{Julia}, one implements an iterator by defining a \code{struct iter} and a method \code{iterate(I::iter, state::Int)} which takes in an iterator \code{I} and a state \code{i} and returns the next element and state \code{(a_(i+1), i + 1)}.
The iterator represents $A$ without holding $A$ in memory.

For example, an iterator for the $n$-th roots of unity might be implemented  as 
\begin{lstlisting}[language=code]
struct RootsOfUnity{T <: Integer}
    n::T
end
Base.iterate(I::RootsOfUnity) = (1.0 + 0.0*im, 0) 
function Base.iterate(I::RootsOfUnity, state::Int)
    next = state + 1
    next >= I.n ? nothing : (exp(2pi*im*(state+1)/I.n), next)
end
\end{lstlisting}
Here, the $(i+1)$-st root of unity is evaluated using the state $i$ instead of the $i$-th root of unity.
When called on the last state, \code{iterate} returns \code{nothing} indicating that there are no more elements.

\subsection{Manipulating iterators}\label{iter_operations}

Given an iterator \code{I} for a set $A$, many operations on $A$ may be performed in a memory efficient way. 
One example is that of \mydef{accumulation}. Given an initial value $c$ in some set $C$ and a function $f:A \times C \to C$, the accumulation of $f$ over $A$ starting at $c$ is 
$$\big(f(a_d, \, \underline{\quad}\,  ) \circ f(a_{d-1}, \, \underline{\quad}\,  ) \circ \cdots \circ f(a_2, \, \underline{\quad}\,  )\circ f(a_1, \, \underline{\quad}\,  )\big)(c).$$
Programmatically, one may write
\begin{lstlisting}[language=code]
function accumulate(f, I, c)
    for item in I
        c = f(c, item)
    end
    return c
end
\end{lstlisting}
This can be evaluated using an iterator for \code{I} by only holding $c$ and one element of $A$ in memory at once. The \code{Iterators} module in \code{Julia} \cite{julia} provides an accumulate function. The above function would be \code{accumulate(f, I; init = c)}, with the difference that the \code{accumulate} function in \code{Julia}~\cite{julia} returns the vector of all intermediate values, not just the last element like our implementation. 
Other familiar functions can be interpreted as accumulation:
\begin{itemize}
\item The maximum $\textrm{max}_{a \in A}\, f(a)$ as:
\begin{lstlisting}[language=code]
max(f, I) = accumulate((c, a) -> max(f(a), c), I, -Inf)
\end{lstlisting}
\item The sum $\sum_{a \in A}{f}(a)$ as:
\begin{lstlisting}[language=code]
sum(f, I) = accumulate((c, a) -> f(a) + c, I, 0)
\end{lstlisting}
\item The number of elements $|A|$ as:
\begin{lstlisting}[language=code]
count(I) = sum(a -> 1, I) = accumulate((c, a) -> 1 + c, I, 0) 
\end{lstlisting}
\item The number of elements  $|\{a \in A \mid {f}(a)=\textrm{true}\}|$ satisfying some condition as: 
\begin{lstlisting}[language=code]
conditional_count(f, I) = sum(a -> f(a) ? 1 : 0, I)
\end{lstlisting}
\end{itemize}

When $A=S \subseteq \mathbb{C}^n$ is a solution set to a polynomial system  \code{sum(a -> a, I)} represents the \textit{trace} of $S$ \cite{TraceTest,SparseTraceTest}, \code{count(I)} represents the number of complex solutions, and \code{conditional_count(is_real, I)} represents the number of real solutions.

New iterators can be constructed from old ones in several ways. Suppose again that \code{I} is an iterator for $A = \{a_1, \ldots, a_d\}$ that $f:A \to C$ is some function. The list 
$$\mydef{f(A)} = (f(a_1),\ldots,f(a_d))$$
can easily be represented by the push-forward iterator \code{f(I)} which holds an element $a$ of $A$ as its state and returns $f(a)$ as its value. This operation justifies the notation of $f_H(\text{\code{I}})$ in \autoref{def_hom_iter}. The \code{iterate} function for \code{f(I)} is essentially the same as that of \code{I}. In \code{Julia} \cite{julia}, this~is 
\begin{lstlisting}[language=code]
Iterators.map(f, I)
\end{lstlisting}
\code{Iterators.map} is the lazy version of the function \code{map}. 

If $f: A \to \{0,1\}$ is a boolean-valued function, then the set $\mydef{A|f} = \{a \in A \mid f(a) = 1\}$ filters out those entries for which $f$ evaluates to false.  
One may implement an iterator \code{J} for $A|f$ in two ways. The first requires an iterator \code{I} for $A$. Then, one defines \code{iterate(J, i)} 
to be \code{iterate(I,i)} if $f(a_i)=1$ and \code{iterate(I, i+1)} otherwise. 
This approach requires no computation of $f$ until a user wants to access elements of $A|f$.  We can implement this in \code{Julia} \cite{julia} as 
\begin{lstlisting}[language=code]
J = Iterators.filter(f, I)
\end{lstlisting}
Alternatively, a user may precompute a bit-vector $b = f(A)$ so that evaluation of $f$ amounts to merely accessing the $i$-th element of the vector $b$. This bit-vector is called a \mydef{bitmask} for $f$ applied to \code{I}. As discussed in the introduction, we implemented \code{bitmask} functionality in our setting via
\begin{lstlisting}[language=code]
J = bitmask_filter(f, I)
\end{lstlisting}

The concatenation of two iterators \code{I} and \code{J} is obtained in \code{Julia} \cite{julia} via 
\begin{lstlisting}[language=code]
Iterators.flatten((I, J))
\end{lstlisting}
To form an iterator for the product of \code{I} and \code{J} one uses 
\begin{lstlisting}[language=code]
Iterators.product(I, J)
\end{lstlisting}
To run iterators \code{I} and \code{J} at the same time, until either of them is exhausted, we iterate through 
\begin{lstlisting}[language=code]
Iterators.zip(I, J)
\end{lstlisting}
If  \code{I} is an iterator for $A=\{a_1,\ldots, a_d\}$ and \code{J} is an iterator for $B=\{b_1,\ldots, b_e\}$, then the zip iterator is an iterator for the maximal diagonal set $\{(a_1, b_1),\ldots,(a_m, b_m)\}$, where $m=\min\{d,e\}.$

\subsection{Composition of homotopy iterators}\label{composition}
Most of the operations in the previous section can be applied to any iterator. A particularly interesting case in the context of homotopy iterators is \mydef{composition} of push-forward operations. This works as follows. Suppose that $H$ and $H'$ are homotopies so that $H(\x; 0)=H'(\x; 1)$. Then the target solutions of $H$ may be used as start solutions to $H'$. On the level of iterators, if $\text{\code{I}}$ represents the start solutions to $H$, then 
\[
f_{H'}(f_{H}(\text{\code{I}})) = (f_{H|H'})(\text{\code{I}})
\]
where $H|H'$ is the \mydef{concatenation of homotopies}
\[
H|H'(\x;t) = \begin{cases}
H(\x;2t-1) & t \in [1/2,1] \\
H'(\x; 2t) & t \in [0,1/2]
\end{cases}
\]

As a consequence of our implementation, which abstracts solution sets from vectors (of solutions) to iterators (for solutions), concatenation of homotopies works elegantly out-of-the-box. Here is an example involving parameter homotopies. We use a vector of start solutions as the start solution iterator \code{I}. Note that a vector can be iterated over, and is thus an iterator itself.

\begin{lstlisting}[language = code]
using HomotopyContinuation  
@var x y p
f = [y - x^2 + p; y - x^3 - p]
F = System(f; variables = [x; y], parameters = [p])
I = [[1, 1], [-1, 1]]
J = solve(F, I;
          start_parameters = [0], target_parameters = [-1],
          iterator_only = true)
\end{lstlisting}
Now, \code{J} is an iterator for the first homotopy that tracks \code{F} from $p=0$ to $p=-1$. We pass the iterator  \code{J} to \code{solve} obtaining an iterator for the parameter homotopy from $-1$ to~$-2$:
\begin{lstlisting}[language = code]
K = solve(F, J;
          start_parameters = [-1], target_parameters = [-2],
          iterator_only = true)
\end{lstlisting}
Recall that this entire code block \emph{never} tracks a single path. Nevertheless, we have obtained a composed iterator for the concatenated homotopy. 
Composition of iterators can itself be iterated, so that we can compose any number of subsequent homotopies. This can be used to encode monodromy loops via iterators. 

\bigskip
\section{Start solution iterators}
\label{sec:start_system_iterators}

The definition of a homotopy iterator (\autoref{def_hom_iter}) requires an iterator for the solutions of the start system in a homotopy. There are several popular choices for a start system $G(\x)$ in a homotopy \eqref{eq:homotopy}. We list some here and discuss how to construct iterators for their solution sets.

\subsection{Total degree start solution iterators}\label{sec:td_start_system}
The basic choice for a start system $G(\x)$ in a straight-line homotopy \eqref{eq:homotopy} is the \mydef{total degree start system}:
\begin{equation}
\label{eq:total_degree}
G(\x) = \begin{bmatrix} g_1(\x) \\ g_2(\x) \\ \vdots \\ g_n(\x) \end{bmatrix}= \begin{bmatrix} x_1^{d_1}-1 \\ x_2^{d_2}-1 \\ \vdots \\ x_n^{d_n}-1 \end{bmatrix} = \textbf{0}, \quad \quad d_i = \textrm{deg}(f_i).
\end{equation}
The number of solutions to $G(\x)=\textbf{0}$ is  $d = \prod_{i=1}^{n} d_i$, which is called the \mydef{B\'ezout bound} of $F$ in reference to B\'ezout's theorem. When it comes to start systems, the total degree start system has the largest solution count, and thus the highest likelihood of being non-optimal. On the flip-side, the solutions are extremely easy to write down: they are $n$-tuples of $d_i$-th roots of unity and thus can be described by a product of iterators. At the beginning of \autoref{sec:iterators}, we explain how to write an iterator for the $d_i$-th roots of unity, and in \autoref{iter_operations}, we explain the product of iterators. The total degree start solution iterator implicitly puts the tuples of roots of unity in bijection with the numbers $1,2,\ldots,d$. The inverse of this bijection,  called \code{bezout_index}, is used in~\autoref{sec:compression}.

In \code{HomotopyContinuation.jl} \cite{HC} one can set up a homotopy iterator for solving a system $F$, that uses the straight-line homotopy together with the total degree start system, as follows.
\begin{lstlisting}[language = code]
I = solve(F; start_system = :total_degree, iterator_only = true)
\end{lstlisting}

\subsection{Polyhedral start solution iterators}\label{sec:polyhedral_start_system}
Another way to construct a start system for $F(\x)$ is to consider its monomial support
\[
\mathcal A_{\bullet}=(\mathcal A_1,\ldots,\mathcal A_n) \text{ where } \mathcal A_i = \{\alpha \in \mathbb{Z}^n \mid [\textbf{x}^{\alpha}]f_i \neq 0\}.
\]
The notation $[\x^{\alpha}]f_i$ denotes the coefficient of the monomial $x_1^{\alpha_1}\cdots x_n^{\alpha_n}$ in the polynomial $f_i$. Each $\mathcal A_i$ is a finite set.
One can then consider $F(\x)$ as a \mydef{sparse polynomial system}, that is, a single member of the family of all polynomial systems with the monomial support $\mathcal A_{\bullet}$. Just as B\'ezout's theorem provides an upper bound for the number of solutions to $F(\x)$ based on the degrees $(d_1,\ldots,d_n)$ of $f_1,\ldots,f_n$, the \mydef{Bernstein-Kouchnirenko-Khovanskii} theorem provides an upper bound on the number of solutions to $F(\x)=0$ in $(\mathbb{C}^\times)^n$ based on the refined information of the supports $\mathcal A_{\bullet}$. This number, which we denote by $d_{\mathcal A_{\bullet}}$ is called the \mydef{BKK bound} of $F(\x)$. 

The BKK bound $d_{\mathcal A_{\bullet}}$ can be evaluated via a polyhedral computation on the \mydef{Newton polytopes} $$P_i = \textrm{convexHull}(\mathcal A_i).$$
Specifically, 
$$d_{\mathcal A_{\bullet}} = \mathrm{MixedVolume}(P_1,\ldots,P_n).$$
One important characterization of mixed volume is as follows. The mixed volume $d_{\mathcal A_{\bullet}}$ is  the sum of the volumes of the \textit{mixed cells} in a \textit{mixed subdivision} of the \mydef{Minkowski sum}
\[
\mathcal A_1+\cdots + \mathcal A_n = \{{\bf a}_1+\cdots+{\bf a}_n \mid {\bf a}_i \in \mathcal A_i\}.
\] 
It is via this combinatorial interpretation of $d_{\mathcal A_{\bullet}}$ that the \mydef{polyhedral homotopy} of Huber and Sturmfels \cite{HS95} computes the solution set to a polynomial system $F(\x)=\textbf{0}$ supported on $\mathcal A_{\bullet}$. The polyhedral homotopy takes the following steps:
\begin{enumerate}
\item Find an appropriate lift $\omega:\mathcal A_{\bullet} \to \mathbb{Z}$ of the supports to induce a mixed subdivision of $\mathcal A_{\bullet}$ involving mixed cells $C_1,\ldots,C_k$, each associated to a vector $\nu_1,\ldots,\nu_k$.\smallskip
\item For each mixed cell $C_i$, construct an easy-to-solve binomial system $B_i({\bf x}) = \bf 0$ in new coordinates which has ${\rm vol}(C_i)$ many solutions, the set of which is denoted $S_i$. \smallskip
\item Construct a homotopy $H_{i}$ which connects the solutions $S_i$ of the binomial system $B_i({\bf x}) = \bf 0$ to $\textrm{vol}(C_i)$ many solutions of $F(\x)=\textbf{0}$.\smallskip
\item Follow the $\textrm{vol}(C_i)$ many solutions of the start system $B_i({\bf x})=\textbf{0}$ along the homotopy $H_i$ to find the same number of solutions to $F(\x)=\textbf{0}$. \smallskip
\end{enumerate}
Therefore the solution set to $F(\x) = \bf 0$ is included in $\bigcup_{i = 1}^k f_{H_i}(S_i)$. When $F$ is not generic, then the number of zeros of $F$ may be strictly smaller than $d_{\mathcal A_{\bullet}}$ and endgames must be used to sort out those paths that do not converge to solutions of $F$. In practice, one generates a random generic system $G$ with support $\mathcal A_{\bullet}$, find zeros of $G$ as described above, and tracks these to zeros of $F$. 

The start system iterator \code{I} can thus be constructed in three steps: First, create an iterator \code{I_MC} for the mixed cells. Then, define a function $f$ that maps a cell $C_i$ to the homotopy iterator $(H_i, \text{\code{I_i}})$, where \code{I_i} is an iterator for $T_i$. Finally, flatten the iterator $f(\text{\code{I_MC}})$. 

In \code{HomotopyContinuation.jl} \cite{HC} one can set up a homotopy iterator for solving a system $F$ that uses a polyhedral start system iterator, as follows.
\begin{lstlisting}[language = code]
I = solve(F; start_system = :polyehdral, iterator_only = true)
\end{lstlisting}
In fact, one can leave out the flag that sets the polyhedral homotopy, since it is the default choice. Moreover, \code{HomotopyContinuation.jl} \cite{HC} will define the two step homotopy that first tracks to a generic system $G$ and then to $F$. On the level of iterators, this is a composition as discussed in~\autoref{composition}. In \autoref{secsec:chemistry} we construct an explicit polyehdral start solution iterator for an example system.

\subsection{Parameter homotopy start solution iterators}
As already mentioned in \autoref{sec:polynomial_homotopy_continuation}, many polynomial systems naturally belong to a family indexed by some parameter space. That is, $F(\x)$ is one instance $F(\x; \p^*)$ of a square \mydef{parametrized polynomial system} 
\begin{equation}
\label{eq:parametrized_system}
F(\x;\p) = \begin{bmatrix} f_1(\x;\p) \\ f_2(\x;\p) \\ \vdots \\ f_n(\x;\p) \end{bmatrix} = \textbf{0}, \quad \quad f_1,\ldots,f_n \in \mathbb{C}[p_1,\ldots,p_k][x_1,\ldots,x_n].
\end{equation}
in $k$ parameters $\p=(p_1,\ldots,p_k)$, $n$ variables $\x=(x_1,\ldots,x_n)$ and $n$ equations $f_1,\ldots,f_n$. It is useful to encode this family via its \mydef{incidence variety}
\[\mathcal I = \{(\x,\p) \mid f_1(\textbf{x};\p) = \cdots = f_n(\textbf{x};\p) = 0 \} \subset \mathbb{C}^n \times \mathbb{C}^{k}
\]
which encodes all parameter-solution pairs to \eqref{eq:parametrized_system}. 
One can visualize the natural projection map ${\pi}: \mathcal I \to \mathbb{C}^{k}$ onto the \mydef{parameter space} $\mathbb{C}^k$  as shown in \autoref{fig:branched_cover},  and interpret the fibres $\pi^{-1}(\p^*)$ as the solutions to \eqref{eq:parametrized_system} specialized at $\p = \p^*$. 
Given any $\p^* \in \mathbb{C}^k$, write ${d_{\p^*}}$ for the number of isolated simple solutions to $F(\x; \p^* )= \mathbf 0$.

The \mydef{parameter continuation theorem} (see \cite{BOROVIK2025102373, MS1989}) states that there exists a proper subvariety ${\Delta} \subset \mathbb{C}^k$ of parameters called the \mydef{exceptional set} of $\pi$ containing all $\p^*$ for which $d_{\p^*}$ is not equal to ${\textrm{deg}(\pi)} = \sup_{p \in \mathbb{C}^k}(d_{p})$. Moreover, it states that  $0<\textrm{deg}(\pi)<\infty$. The complement of $\Delta$ is the open subset ${\mathcal U}$ of \mydef{regular values} of $\pi$, which are also sometimes called \mydef{generic parameters}.
\begin{figure}[!htpb]
\includegraphics[scale=0.575]{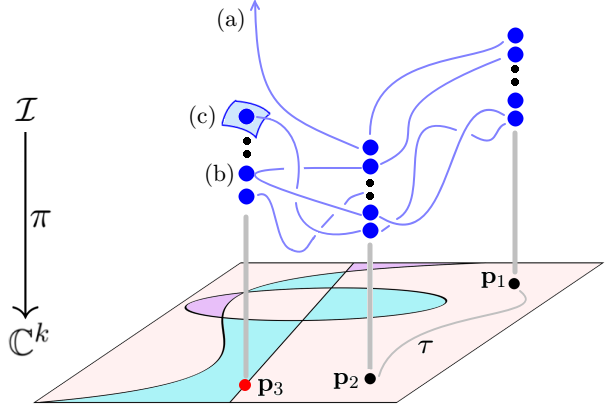}
\caption{A visualization of the branched cover $\pi: \mathcal I \to \mathbb{C}^{k}$ from the incidence variety to the parameter space.
}
\label{fig:branched_cover}
\end{figure}

Another interpretation of the exceptional set $\Delta$ is that it is the Zariski closure of the set of parameter values for which a homotopy from a generic parameter exhibits poor endgame behavior as outlined in \autoref{sec:polynomial_homotopy_continuation}. This point-of-view is illustrated in \autoref{fig:branched_cover} where $\mathbf p_3$  is a parameter value which has each endgame feature.

An iterator for the solutions to $F(\x;\p^*)=\textbf{0}$ for \emph{any} generic $\p^*$ may be used as a start solution iterator for a parameter homotopy. Continuing the themes of the article, such a fibre need not be explicitly held in memory, but can be represented by a homotopy iterator from one of the previous natural start systems. \autoref{sec:compression} explains how to obtain an optimal homotopy for such a fibre when the start system is the total degree system.

\subsection{Combinatorial start solution iterators}
Another way to obtain a start solution set for a parameter homotopy is through its combinatorics.  Many classical enumerative problems have combinatorial machinery for determining their solution counts. Examples include  sparse polynomial systems and Schubert calculus. In a subset of these instances, the combinatorial count extends to an explicit natural bijection $\varphi: \mathfrak C \to S$ between some set $\mathfrak C$ of combinatorial gadgets and a solution set $S \subseteq \mathbb{C}^n$ to $F(\x; \p^*) = \textbf{0}$ for a generic parameter $\p^* \in \mathbb{C}^k$. In this case, an iterator for $\mathfrak C$ can be pushed forward via $\varphi$ to obtain an iterator for $\pi^{-1}(\p^*)$, that is, a start solution iterator. We hope that the power of combinatorial start systems inspires further searches for such bijections. 

We illustrate this idea with a single example coming from an interpolation problem considered in \cite{Necklaces}. The problem asks for the polynomially parametrized bi-degree $(d_1,d_2)$ of a curve in $\mathbb{C}^2$ which meets a generic germ $f = \sum_{i=1}^\infty c_i x^i$ at $x=0$ to as high an order as possible, namely $d_1+d_2-1$. The interpolating curve is represented as 
\[
t \mapsto (x(t),\ y(t)), \quad \text{where } x(t) = a_1t+a_2t^2+\cdots+a_{d_1}t^{d_1},\ y(t)= b_1t+b_2t^2+\cdots+b_{d_2}t^{d_2},
\]
and the equations are 
\[
h_i(\textbf{a},\textbf{b}) = \text{coefficient of $t^i$ in } (y(t)-f(x(t))) = 0 \quad \text{ for } t=1,\ldots,d_1+d_2-1
\]
resulting in a  polynomial system in $d_1+d_2$ parameters $c_1,\ldots,c_{d_1+d_2}$,  $d_1+d_2$ variables $a_1,\ldots,a_{d_1}$, $b_1,\ldots,b_{d_2}$, and $d_1+d_2-1$ equations $h_1,\ldots,h_{d_1+d_2-1}$. There is a weighted homogeneity corresponding to a reparametrization $t \mapsto \alpha t$ which can be made finite by the condition $a_{d_1}\cdot a_{d_2}=1$. Alternatively, one may select only the smooth interpolants by setting $a_1=1$, as is done in the following code
\begin{lstlisting}{language=code}
function polynomial_interpolants(d1, d2)
  d = d1 + d2
  @var c[1:d]; @var a[1:d1]; @var b[1:d2]; @var t
  x = sum([a[i]*t^i for i in 1:d1])
  y = sum([b[i]*t^i for i in 1:d2])
  f = y - sum([c[i]*x^i for i in 1:d])
  C = coefficients(f, t) |> reverse
  System([C[1:d-1]; a[1] - 1], variables = vcat(a, b), parameters = c)
end
\end{lstlisting}

One main result of \cite{Necklaces} is that the degree of this problem is equal to the cardinality $\mathcal N_{d_1,d_2}$ of the set $\mathfrak C$ of aperiodic binary necklaces on $d_1$ white beads and $d_2$ black beads. The stronger result of that article is a bijection $\varphi$ between $\mathfrak C$ and the fibre $\pi^{-1}(\textbf{c})$  over the germ of the function 
\[
y=\frac{1}{x+1}-1 = -x+x^2-x^3+x^4+\cdots \quad \quad \textbf{c} = (-1,1,-1,\ldots)
\]
The bijection is as follows. Superimpose any necklace onto the $d_1+d_2$ roots of $-1$ partitioning them into sets $\{\alpha_1,\ldots,\alpha_{d_1}\}$ and $\{\beta_1,\ldots,\beta_{d_2}\}$ of white and black roots of $-1$. Then construct the polynomials 
\[
x(t) = -1+\prod_{i=1}^{d_1}(\alpha_it+1) \quad \text{ and } \quad y(t) = -1+\prod_{i=1}^{d_2}(\beta_it+1).
\]
Such curves are solutions since 
\[
\frac{1}{(x(t)+1)(y(t)+1)} \equiv 1 \textrm{ mod } t^{d_1+d_2} \iff \prod_{i=1}^{d_1}(\alpha_it+1)\prod_{i=1}^{d_2}(\beta_it+1) = 1+t^{d_1+d_2}
\]
This construction does not take into account rotation of the roots of unity, which corresponds to the finitely many reparametrizations $t \mapsto e^{2\pi \sqrt{-1}/(d_1+d_2)}t$, and so one only need to take a single representative of each such equivalence class. The following code functions as $\varphi$:

\begin{lstlisting}{language=code}
function solution_from_necklace(W, B)
	@var t; d = length(W) + length(B)
	x = prod([exp(2*pi*im*k/d)*t + 1 for k in W]) - 1
	y = prod([exp(2*pi*im*k/d)*t + 1 for k in B]) - 1
	avec = coefficients(x, t) |> reverse
	bvec = coefficients(y, t) |> reverse
	reparam = 1/avec[1]
	avec_smooth = [avec[i]*reparam^i for i in 1:length(W)]
	bvec_smooth = [bvec[i]*reparam^i for i in 1:length(B)]
	[avec_smooth; bvec_smooth]
end
\end{lstlisting}

\begin{figure}
\includegraphics[scale=0.3]{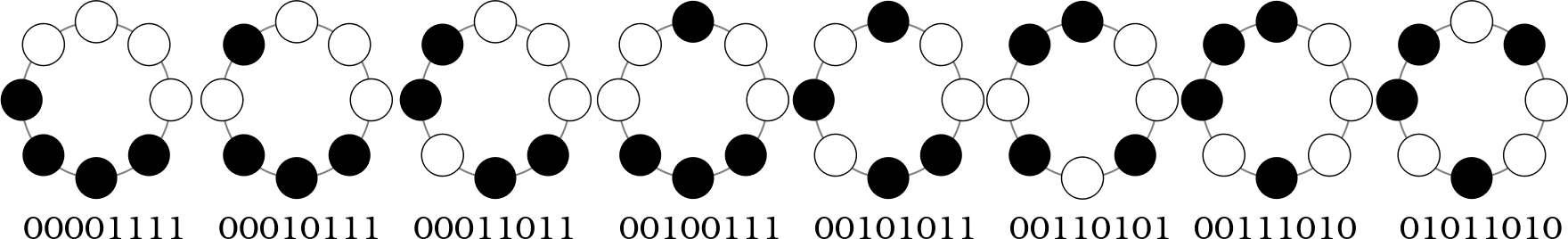}
\caption{A depiction of the 8 aperiodic necklaces on four white and four black beads used as a combinatorial indexing set for a set of start solutions.}
\label{fig:necklaces}
\end{figure}
Now, for any iterator \code{C} for the set $\mathfrak C$, one may push $\mathfrak C$ forward via the above function to produce a combinatorial start solution iterator. 

For example, for $d_1=d_2=4$, there are eight aperiodic $(4,4)$-necklaces (see \autoref{fig:necklaces}). The following code solves the system for a generic germ in the homotopy iterator version of the usual two-step parameter homotopy. 
\begin{lstlisting}{language=code}
d1, d2 = 4, 4; d = d1 + d2
F = polynomial_interpolants(d1, d2)
gen_parameters = randn(ComplexF64, d); target_parameters = randn(Float64, d)
M = [0 0 0 0 1 1 1 1;  0 0 0 1 0 1 1 1;  0 0 0 1 1 0 1 1;  0 0 1 0 0 1 1 1;
     0 0 1 0 1 0 1 1;  0 0 1 1 0 1 0 1;  0 0 1 1 1 0 1 0;  0 1 0 1 1 0 1 0]
C = [[findall(x -> x==i, r) for i in 0:1] for r in eachrow(M)]
I = map(N -> solution_from_necklace(N...), C)
J_intermediate = solve(F, I; iterator_only = true,
                          start_parameters=[(-1)^i for i in 1:d], 
                          target_parameters = gen_parameters)
J_final = solve(F, J_intermediate; iterator_only = true,
                          start_parameters = gen_parameters,
                          target_parameters = target_parameters)
\end{lstlisting}
 
In \autoref{secsec:chemistry}, we provide another illustrative example of how combinatorial iterators can be pushed forward to solution iterators. In that setting, we have a combinatorial interpretation for the mixed cells which we may use as an iterator for the start solutions of a polyhedral homotopy.

 \bigskip
 \section{Implementation, applications, and examples}\label{sec:implementation}
 \label{sec:implementation_applications_examples}
 \subsection{Implementation in HomotopyContinuation.jl and Functionality}
Given the modularized structure of \code{HomotopyContinuation.jl} (\code{HC.jl}) \cite{HC}, the implementation of homotopy iterators was fairly straightforward. We introduce a new data type called a \code{ResultIterator} that encodes homotopy iterators. The name is motivated by the data structure \code{Result} that \code{HC.jl} uses to encode the output of its \code{solve} function. 

The object \code{ResultIterator} has three fields: a \code{Solver}, a \code{StartSolutionsIterator}, and optionally a \code{bitmask}. The first two are existing datatypes in \code{HC.jl} and the last field is implemented as a \code{BitVector}, a datatype which is native in \code{Julia} \cite{julia}. A  \code{Solver} in \code{HC.jl} holds a homotopy together with other various information (such as endgames and tracker settings). A  \code{StartSolutionsIterator} is a datatype that encodes start solution iterators. Therefore, a \code{ResultIterator} implements our idea of a homotopy iterator (\autoref{def_hom_iter}). Note that this implementation is not entirely memoryless, since computing and storing both a \code{Solver} and a \code{StartSolutionsIterator} consumes memory. 

We implemented \code{ResultIterator} as a subtype of \code{AbstractResult}, which is the suptype of all objects that the \code{HC.jl} main function \code{solve} returns as its output. This way \code{ResultIterator} integrates automatically in the \code{HC.jl} ecosystem -- thanks to multiple dispatch in \code{Julia}.

\subsection{Illustrating our implementation in an example} 
 The problem of $3264$ conics is a famous enumerative problem which asks for the $3264$ conics tangent to five general conics in the plane. A  set of equations for this problem in the affine chart where no conic passes through the origin is easy to define in \code{HomotopyContinuation.jl}:
\begin{lstlisting}[language=code]
using HomotopyContinuation, LinearAlgebra   
@var x, y, a[1:5, 1:6], b[1:5],  u[1:5], v[1:5]
FiveConics = [a[i,1]*x^2 + a[i,2]*x*y + a[i,3]*y^2 + a[i,4]*x + a[i,5]*y + 1 
                for i in 1:5]
SolutionConic = b[1]*x^2 + b[2]*x*y + b[3]*y^2 + b[4]*x + b[5]*y + 1 
function steiner_condition(i)
    vars = [u[i], v[i]]
    S = evaluate(SolutionConic, [x, y] => vars)
    C = evaluate(FiveConics[i], [x, y] => vars)
    D = det([differentiate(S, vars) differentiate(C, vars)])
    [S, C, D]
end
Eqs = map(steiner_condition, 1:5)
F = System(reduce(vcat, Eqs), variables = vcat(b, u, v), parameters = vec(a))
\end{lstlisting}
On a standard laptop monodromy solve takes approximately $60$ seconds to solve this enumerative problem without \emph{a priori} knowledge of the number of solutions. A polyhedral homotopy takes approximately $140$ seconds, and a total degree homotopy, approximately $450$ seconds. 

The construction of a homotopy iterator induced by a polyhedral iterator is as costly as the construction of the tracker and the start solution iterator:
\begin{lstlisting}[language=code]
tp = randn(Float64,30)
@time I = solve(F, target_parameters = tp; iterator_only = true)
0.029489 seconds (237.08 k allocations: 9.072 MiB)
  ResultIterator
  ==============
 * start solutions: PolyhedralStartSolutionsIterator
 * homotopy: Polyhedral
\end{lstlisting}

Target solutions are represented as another iterator, filtered by those path results which are finite and non-singular:
\begin{lstlisting}[language=code]
julia> @time solutions(I)
Warning: Since result is a ResultIterator, counting multiple results
0.000046 seconds (8 allocations: 880 bytes)
\end{lstlisting}
The warning here indicates that this iterator will not check whether there are double solutions coming from non-simple zeros. By constrast, the usual \code{solve} function in \code{HC.jl} will check the results on double solutions. 
Solutions can then be collected, which triggers the actual computation of continuation upon the start solutions:
\begin{lstlisting}[language=code]
julia> @time collect(solutions(I))
Warning: Since result is a ResultIterator, counting multiple results
142.727156 seconds (599.95 k allocations: 65.479 MiB, 0.05% gc time)
3264-element Vector{Vector{ComplexF64}}...
\end{lstlisting}

\begin{figure}[!htpb]
\includegraphics[scale=0.15]{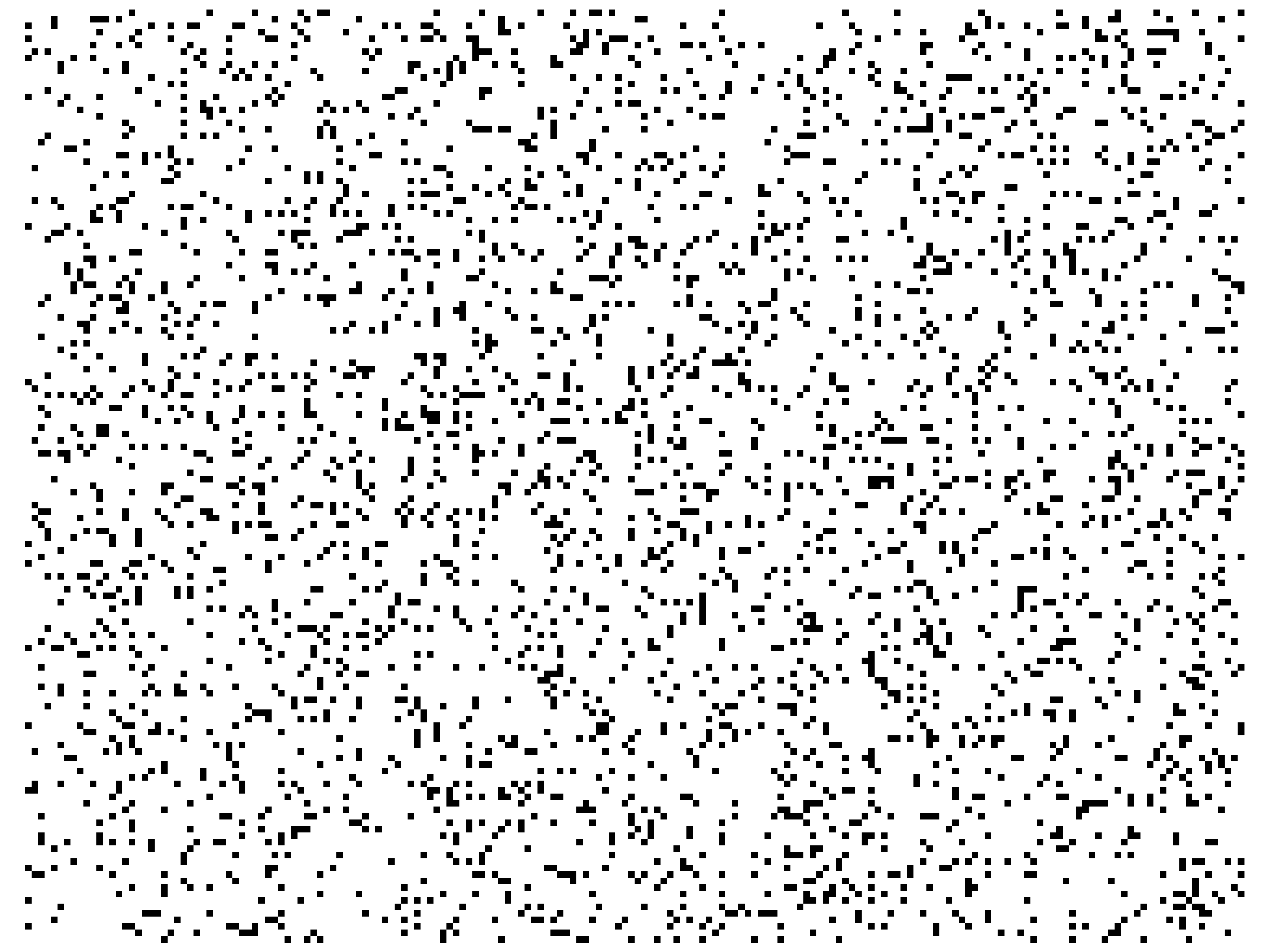}
\caption{A $144 \times 188$ matrix illustrating the length $27072$ bitmask indicating which of the polyhedral start solutions are tracked to solutions to Steiner's conic problem. The black pixels are start solutions which map to finite nonsingular target solutions. The white pixels are not. }
\label{fig:bitmask}
\end{figure}

Alternatively, instead of collecting all solutions, we could produce a bitmask of those start solutions which are finite and non-singular. 
\begin{lstlisting}[language=code]
julia> @time B = bitmask_filter(s -> is_success(s) && is_nonsingular(s), I)
142.011248 seconds (444.42 k allocations: 52.537 MiB)
ResultIterator
==============
*  start solutions: PolyhedralStartSolutionsIterator
*  homotopy: Polyhedral
*  filtering bitmask
\end{lstlisting}
The bitmask in this case is a $27072$ length bit-vector with $3264$ one's corresponding to the $3264$ start solutions which map to finite non-singular target solutions. Such a bit-vector can be easily stored, for example,  \autoref{fig:bitmask} depicts the vector as black dots in a 2D image.
Note also that the bitmask only requires $27072 \textrm{ bits} \approx 26.43\textrm{kb}$
whereas the $3264$ solutions in $\mathbb{C}^{15} \cong \mathbb{R}^{30}$ requires $30 \cdot 64 \cdot 3264 = 6266880 \textrm{ bits} = 765\textrm{kb}$. This bitmask could be transmitted to a recipient, along with the datum of a polyhedral homotopy, who would then know \emph{a priori} which solutions are necessary to track.

 \subsection{Total degree compression}\label{sec:compression}

Given a solution set, homotopy iterators can be used to compress the solutions for optimal unpacking at a later time. Suppose, for instance, that we have represented the solution set $T$ to a system $F(\x)=\textbf{0}$ by an iterator \code{J}. 
To compress \code{J} we could track it to the total degree start system $G$ as $t$ goes from $0$ to $1$ in the straight-line homotopy $H = (1-t)F +  t\gamma G$, where $\gamma \in\mathbb C^\times$ is random, and keep track of which total degree start solutions are found. 
For this we need a function \code{bezout_index}, which is the inverse function of the implicit ordering given by a start solution iterator for the total degree start system. 
\begin{lstlisting}[language=code]
function indices_of_entries(z, d)
    args = map(angle, z) ./ (2pi) 
    map(zip(args, d)) do (ai, di)
      bi = round(ai * di) |> Int
      mod(bi, 0:di-1)
    end
end
function bezout_index(z, d)
  ind = indices_of_entries(z, d)
  l = length(d)
  BI = sum(prod(d[j] for j in 1:i-1) * ind[i] for i in 2:l) + ind[1] + 1
  Int(BI)
end
\end{lstlisting} 
This function is then used in a compress function that realizes the above idea for compression:
\begin{lstlisting}[language=code]
function compress(F, J; gamma = cis(rand() * 2pi))
  d = degrees(F); v = variables(F);
  G = System(gamma .* [vi^di - 1 for (vi, di) in zip(v, d)], variables = v)
  I = solve(F, G, J; iterator_only = true)
  S_F = Iterators.map(solution, I) # solutions of G from F
  S_G = total_degree_start_solutions(d) # all solutions of G
  ind = Iterators.map(s -> bezout_index(s, d), S_F) # Bezout index
  B = falses(prod(d)); for i in ind; B[i] = 1; end
  solve(G, F, S_G; iterator_only = true, bitmask = B)
end
\end{lstlisting}
In the above function, the iterator \code{BI} represents the total degree  start solutions which are tracked to $T$ under $H$.  Moreover, the solutions $T$ are represented as an iterator in the output \code{J}. Crucially, it is the bitmask \code{B} which encodes the solutions to \code{F}.

Now, we can run compression on a solved system $F(\x)=\textbf{0}$ with solutions $T$ as follows:
\begin{lstlisting}[language=code]
T = solve(F)
C = compress(F, T)
\end{lstlisting} 
We refer to the iterator \code{C} as the \mydef{total degree compression} of the solutions $T$. 
Communicating \code{C} via email requires only the transmission of $F$ and $B$. The total degree start system $G$ is implicit, as is the straight-line homotopy $H$. We remark that it is possible that the homotopy is not generic. We can unpack $T$ by collecting this iterator:
\begin{lstlisting}[language=code]
solutions_of_F = collect(C)
\end{lstlisting} 

A fantastic feature of compression is that the solution set \code{T} need not be held in memory all at once either. For example, to obtain the same total degree compression one could run:
\begin{lstlisting}[language=code]
I = solve(F; iterator_only = true, start_system = :polyhedral)
J = Iterators.filter(s -> is_success(s) && is_nonsingular(s), I)
C = compress(F, J)
\end{lstlisting} 

As an extreme example, consider the problem of computing $d=10^{10}$ solutions in $\mathbb{C}^{100}$ to a system of mixed volume $d$ and B\'ezout bound $10^{12}$. Storing this many solutions is infeasible and requires about $16$ Terabytes. Tracking via the polyhedral start system is optimal, but perhaps the initialization of a polyhedral start system requires significant computation by a user, say Alice, who has the time and resources to do so. Alice's peer Bob, on the other hand, does not have the time and resources to repeat it. Without homotopy iterators, Alice would not be able to represent these $d$ solutions, or effectively communicate about them to Bob. With a homotopy iterator, Alice could run the above code and produce a total degree compression iterator for Bob, despite her own memory restrictions. Doing so makes communication between Alice and Bob about the $124^{\textrm{th}}$ solution, for example, possible.

Finally, we remark on another application of tracking solutions in the less-frequented direction of the total degree homotopy. Namely, for any generic system with a fixed set of degrees, its solutions are implicitly labeled by their B\'ezout index. This is a value which may be computed on each individual solution without needing access to the others. Consequently, the B\'ezout indices function as a natural hash function for monodromy solving using monodromy coordinates \cite{monodromyCoordinates}.

 \subsection{Finding a single solution with certain properties}

   In some applications, one only wants to find a single solution with a given property, e.g., a real solution. 
 In this case, the iterator setup provides not only a low-memory solution, but also a significant reduction in computation time since the computation can terminate once a single solution is found. Indeed, if there are $N$ start solutions and $r$ target solutions have the desired property, then the probability that one path has the desired property is $N/r$ and one expects to find a target solution with the desired property after $r/N$ paths.
  
 We illustrate this with the cyclic $n$-roots problem, which asks for the isolated roots of the  system: 
 \begin{align}\label{eq:ncyclic}
 \textrm{Cyclic}(n)=	\begin{cases}
 		x_0 + x_1 + \cdots + x_{n-1} = 0\\
 		\sum\limits_{i = 0}^{n-1} x_{i} x_{(i+1\!\!\!\!\mod n)} \cdots x_{(i+j \!\!\!\!\mod n)} = 0 \quad \textrm{for $j = 1, \ldots, {n-2}$}. \\
 		x_0 x_1 \cdots x_{n-1} = 1. 
 	\end{cases}
 \end{align}
This is a benchmark problem in numerical algebraic geometry. Let us first implement it in \code{HomotopyContinuation.jl} \cite{HC}.
\begin{lstlisting}[language = code]
using HomotopyContinuation
function cyclic(n)
    @var z[1:n]
    eqs = [sum(prod(z[(k-1)%n+1] for k = j:j+m) for j = 1:n) for m = 0:(n-2)]
    push!(eqs, prod(z) - 1)
    System(eqs, z)
end
\end{lstlisting}
 We use our iterator to find a single solution satisfying the condition that it is real. Here is example code for $n=5$. 
\begin{lstlisting}[language=code]
F = cyclic(5)
I = solve(F, iterator_only = true)
J = Iterators.filter(s -> is_real(s) && is_success(s), I)
first(J)
\end{lstlisting}
The runtime and memory allocations for this experiment may be found in \autoref{fig:ncyclic-one-real}. 
 
 \begin{figure}[!htpb]
 \includegraphics[scale=0.35]{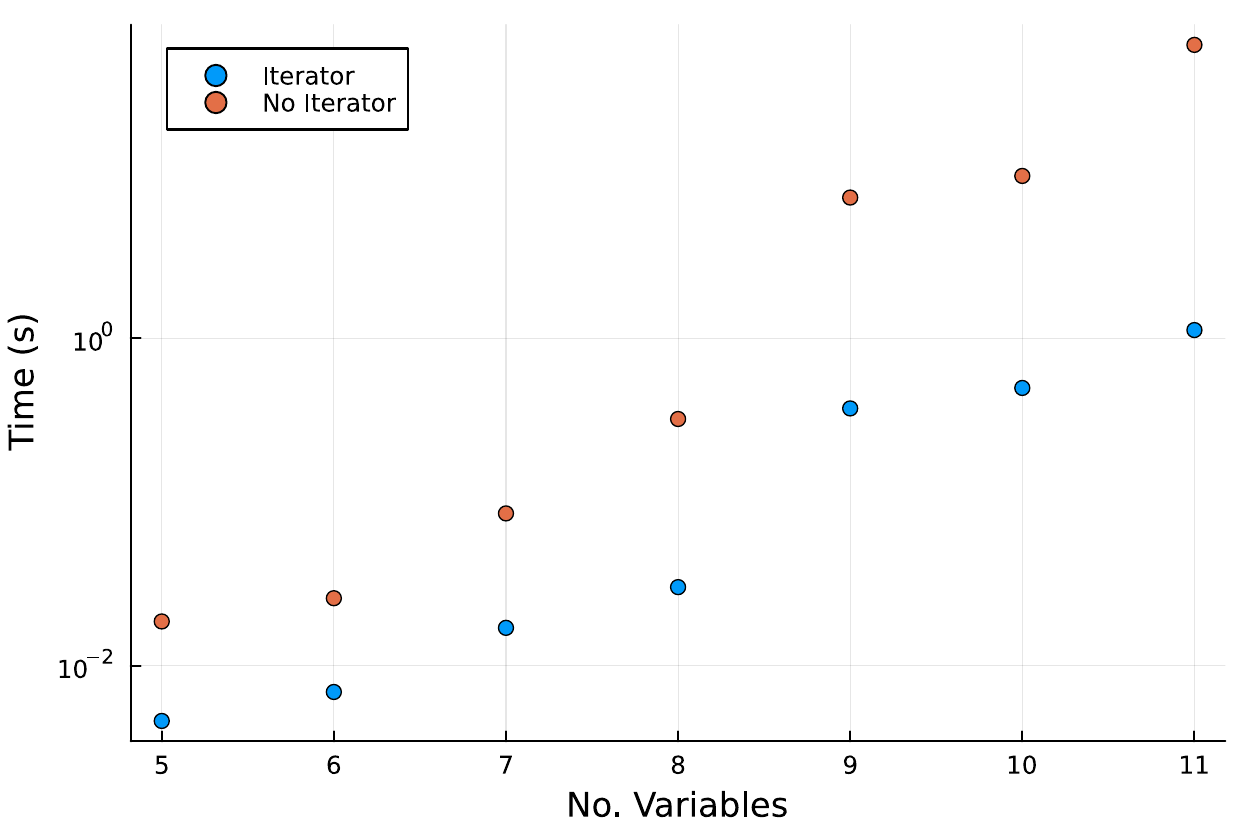}
 \includegraphics[scale=0.35]{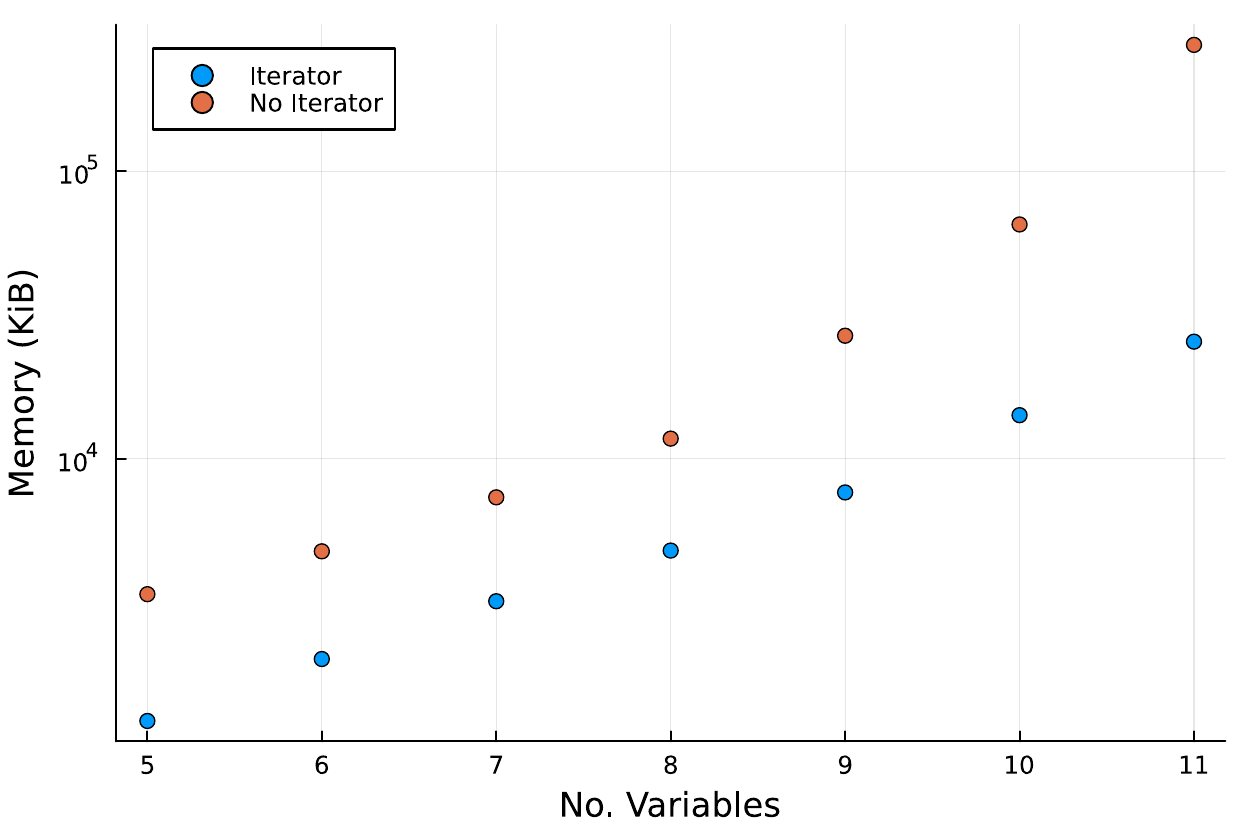}
 \caption{Runtime (left) and memory allocations (right) for the task of finding one real root of \eqref{eq:ncyclic}.}\label{fig:ncyclic-one-real}
 \end{figure}

 \subsection{Brute force sampling for all real solution sets}
Given an enumerative problem $F(\x;\p)$ in variables and parameters, one may seek a parameter value for which the solutions of $F$ exhibit certain behavior. A common example is to find a system with all real solutions. Doing so is not so easy since often the regions in the parameter space for which all solutions are real are extremely small, if they exist at~all.

A brute-force approach to this problem is to repeatedly sample parameter values $\mathbf p$, solve the system $F$, and check if all solutions are real. When $F$ has $d \gg 0$ solutions and we sample $N$ times, this process costs $d\cdot N$ path tracking operations. However, an iterator can recognize, prior 
to its full collection, whether all solutions are real. Here is example code.
\begin{lstlisting}[language=code]
I = solve(F, iterator_only = true)
Iterators.any(s -> is_success(s) && !is_real(s), I)
\end{lstlisting}
The second line will return \code{true} precisely when there is a non-real solution in the solution set of~$F$. The evaluation is lazy: when the first non-real solution is encountered in the iterator, \code{true} is returned without computing the remaining solutions. If there are $k$ non-real solutions out of $n$, then the expected index of the first non-real solution is~$(n+1)/(k+1)$. This drastically reduces the number of paths required for a brute-force search for total reality.

\begin{table}
\begin{tabular}{|c|c|c|c|c|}
 \hline 
\# real & 3 & 7 & 15 & 27 \\ \hline \hline 
Frequency &745530 &210801 &42752 & 917 \\ \hline 
Expected tracks & 1.12 & 1.33 & 2.15 & 27 \\ \hline
\end{tabular}
\bigskip 
\caption{The frequency per million of instances of the problem of $27$ lines with $n-k \in \{3,7,15,27\}$ real solutions along with the expected number of path-tracking operations required to evaluate whether all solutions are real. }
\label{table:27linesdata}
\end{table}

For our formulation of the problem of $27$ lines on a cubic, \autoref{table:27linesdata} gives the frequencies of finding $n-k$ real solutions, out of a sample size of $N=1,000,000$. In particular, it approximates the probability of choosing real parameters for which the problem of $27$ lines has $27$ real solutions by approximately $0.000917$. Thus, the expected number of parameters needed to solve for in order to find such an instance is approximately $\frac{1}{0.000917} \approx 1091$. In the classical setting where a user would solve for all $27$ lines in each instance, this would require a total of $27 \cdot  1091= 29457$ path-tracking operations. However, using homotopy iterators, one expects to use 
\[
\frac{1.12 \cdot 745530 + 1.33 \cdot 210801+ 2.15 \cdot 42752 + 27 \cdot 917}{1000000}= 1.23203473
\]
path-tracking operations on each trial, and thus $ 1091 \cdot 1.23203473 \approx 1344$ many in total. This provides a savings of factor of approximately $22$.

 \subsection{A combinatorial polyhedral start solution iterator}
 \label{secsec:chemistry}
 A homotopy iterator starting at a polyhedral start system (\autoref{sec:polyhedral_start_system}) becomes even more powerful when the mixed cells can be combinatorially iterated over. This is the case in the following example.
  
Let $I_j$ denote the  unit interval ${\rm conv}({\bf 0}, {\bf e}_j)\subset \mathbb{R}^n$ and consider the  sparse polynomial system $F=(f_1,\ldots,f_n)$ with Newton polytopes $
(C_{n,1},\ldots,C_{n,n})$ given by the stretched cubes
\[
C_{n,i} =  I_1 + \cdots + I_{i-1} + 2I_i +I_{i+1}+ \cdots + I_n. 
\]
We suppose each $f_i$ is supported on the lattice points $\mathcal C_{n,i}$ of $C_{n,i}$. For instance, when $n=2$ 
$$
 \begin{aligned}
   f_1(x_1, x_2) &= a_1 + a_2x_1 + a_3x_2 + a_4 x_1 x_2 + a_5 x_1^2 + a_6 x_1^2 x_2 = 0 \\
   f_2(x_1, x_2) &= b_1 + b_2x_1 + b_3x_2 + b_4 x_1 x_2 + b_5x_2^2 + b_6 x_1 x_2^2 = 0.
 \end{aligned}
$$
 We first identify the BKK bound of the system $F(\x)=\textbf{0}$ as a sum over the \mydef{symmetric group} $S_n$ of degree $n$. The formula involves the number $\textrm{Fix}(\sigma)$ of fixed points of a permutation $\sigma$. 
 \begin{lemma}\label{lem:CCelimBKK}
The mixed volume of $(C_{n,1},\ldots,C_{n,n})$ is
 $${\rm MixedVolume}(C_{n,1}, \ldots, C_{n,n}) = \sum_{\sigma \in S_n} 2^{{\rm Fix}(\sigma)}.$$
\end{lemma}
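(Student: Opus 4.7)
The plan is to exploit the multilinearity of mixed volume with respect to Minkowski sum in each slot. First I would rewrite each Newton polytope as
\[
 C_{n,i} = I_i + \sum_{j=1}^n I_j,
\]
so that $C_{n,i}$ is the Minkowski sum of $n+1$ unit intervals in which $I_i$ appears twice and every other $I_j$ appears once.

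Next I would apply multilinearity in every slot to expand
\[
 \MV(C_{n,1}, \ldots, C_{n,n}) = \sum_{\pi : [n]\to[n]} w(\pi)\, \MV(I_{\pi(1)}, \ldots, I_{\pi(n)}),
\]
where $w(\pi)$ records how many times the term $(I_{\pi(1)}, \ldots, I_{\pi(n)})$ arises in the expansion. Because in slot $i$ the interval $I_i$ contributes two summands (coming from $2I_i$) while every other $I_j$ contributes one, the weight factors as $w(\pi) = \prod_{i=1}^n (1 + \mathbf{1}[\pi(i)=i]) = 2^{|\{i : \pi(i) = i\}|}$.

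I would then invoke the standard fact (a one-line BKK computation, or equivalently the determinantal formula for the mixed volume of line segments) that
\[
 \MV(I_{j_1}, \ldots, I_{j_n}) = |\det(e_{j_1}, \ldots, e_{j_n})| = \begin{cases} 1 & \text{if } (j_1, \ldots, j_n) \text{ is a permutation of } [n],\\ 0 & \text{otherwise,} \end{cases}
\]
so only the terms with $\pi \in S_n$ survive the expansion, and each such $\sigma$ contributes exactly $2^{\mathrm{Fix}(\sigma)}$, giving the claimed identity.

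The only real bookkeeping to be careful with is the weight $w(\pi)$: one must verify that the doubled copy of $I_i$ in slot $i$ yields a factor of $2$ exactly at fixed points of $\pi$ and nowhere else, so that the exponent is $|\{i : \pi(i) = i\}|$ rather than something larger. Once this is settled, the proof is a clean three-line computation from multilinearity plus the coordinate-segment mixed volume formula, and no deeper polyhedral input is required.
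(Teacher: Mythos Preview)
Your proposal is correct and follows essentially the same route as the paper: both decompose each $C_{n,i}$ as a Minkowski sum of the coordinate intervals with $I_i$ doubled, expand the mixed volume by multilinearity over all maps $[n]\to[n]$, and observe that only bijections survive with weight $2^{\mathrm{Fix}(\sigma)}$. The only cosmetic difference is that the paper pulls the factor $2$ out via homogeneity (writing the summand in slot $i$ as $2^{\delta_{i,\phi(i)}}I_{\phi(i)}$) and then argues separately that non-bijective $\phi$ give a non-full-dimensional Minkowski sum, whereas you count the two copies of $I_i$ combinatorially and invoke the determinantal formula $\MV(I_{j_1},\ldots,I_{j_n})=|\det(e_{j_1},\ldots,e_{j_n})|$ to handle both the vanishing and the normalization at once.
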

\begin{proof}
  Since
  $$C_{n,i} = I_1 + \cdots + I_{i-1} + 2I_i +I_{i+1}+ \cdots + I_n,$$
by  multilinearity of the mixed volume, we have that  
  \begin{align*}
    {\rm MixedVolume}(C_{n, 1}, \ldots, C_{n, n})
    &=\!\!\! \sum_{\phi \colon [n] \to [n]}\!\!
    {\rm MixedVolume}(2^{\delta_{1, \phi(1)}}I_{\phi(1)}, \ldots, 2^{\delta_{n, \phi(n)}}I_{\phi(n)})\\
    &= \!\!\!\sum_{\phi \colon [n] \to [n]} \!\!2^{{\rm Fix}(\phi)}
    {\rm MixedVolume}(I_{\phi(1)}, \ldots, I_{\phi(n)}).
  \end{align*}
  where $\delta_{i,j}$ is the Kronecker symbol. Although the summation is over \emph{all} functions $\phi \colon [n] \to [n]$, only the terms involving permutations are nonzero. Indeed, if $\phi$ is not bijective then the polytope $\lambda_1 I_{\phi(1)} + \cdots + \lambda_n I_{\phi(n)}$ is not full-dimensional and so the corresponding mixed volume is zero. Thus, the above sum may be taken over permutations $\sigma \in S_d$ and the mixed volume terms are equal by symmetry:
  \begin{align*}
    {\rm MixedVolume}(C_{n, 1}, \ldots, C_{n, n}) &= \sum_{\sigma \in S_n} 2^{{\rm Fix}(\sigma)} {\rm MixedVolume}(I_1, \ldots, I_n). 
  \end{align*}
  Since ${\rm vol}(\lambda_1 I_1 + \cdots + \lambda_n I_n) = \lambda_1 \cdots \lambda_n$, we have that ${\rm MixedVolume}(I_1, \ldots, I_n) = 1$, completing the proof.  
\end{proof}
The sequence of mixed volumes given by \autoref{lem:CCelimBKK} is \href{https://oeis.org/A000522}{A000522} in the OEIS \cite{OEIS:A000522}. The following table shows its growth compared to the B\'ezout bound $(n+1)^n$:

\[
\begin{array}{|c|c|c|c|c|c|c|c|c|c|c|c|c|c|c|}
  \hline
  n & 2 & 3 & 4 & 5 & 6 & 7 & 8 & 9 & 10 & 11\\
  \hline
  \textrm{B\'ezout Bound} & 9 & 64 & 625 & 7776 & 117\,649 & 2\, {\rm mil} & 43\,{\rm mil} & 1\,{\rm bil} & 25\,{\rm bil} & 743\, {\rm bil}\\
  \hline
  \textrm{BKK Bound} & 5 & 16 & 65 & 326 & 1957 & 13\,700 & 109\,601 & 986\,410 & 9\,864\,101 & 108\,505\,112\\
  \hline
\end{array}\]
\smallskip

Although the polyhedral start system is significantly more efficient than the total degree start system, setting it up is costly: for $n=10$ the mixed volume computation in \code{HC.jl} \cite{HC}  takes approximately $45$ minutes.
To avoid this, we bypass the automated construction of a polyhedral start system in~\code{HC.jl}~\cite{HC} by hard-coding its construction explicitly ourselves.

We define the components of the lift $\omega_i \colon \mathcal C_{n,i} \to \mathbb Z$ by 
\begin{equation}\label{weight}\omega_i(v) = i \sum_{j=1}^n v_j,\qquad v \in \mathcal C_{n, i}.\end{equation}
The following ingredients are required to define a \code{MixedCell} in \code{MixedSubdivisions.jl} \cite{MixedSubdivisions}
\begin{itemize}
\item a choice of edge for each polytope $C_{n, j}$,
\smallskip
\item the normal vector $\Tilde{\sigma}$ of the mixed cell,
\smallskip
\item the vector $\beta \in \R^n$ whose $j$th entry is $\min_{v \in C_{n,j}} \langle v,  \Tilde \sigma \rangle$,
\smallskip
\item a boolean indicating whether the mixed cell is a fine mixed cell, and
\smallskip
\item the volume of the mixed cell. 
\smallskip
\end{itemize}
The mixed cell is the Minkowski sum of the edges in the first bullet point.
Note that the vector $\beta$ can be computed from the normal vector $\Tilde \sigma$.
Furthermore, all our mixed cells are fine, since we select one edge from each polytope by construction.
The next result describes the edges, normal vector, and volume of each mixed cell in the subdivision induced by $\omega$. 
\begin{proposition}
  Every mixed cell of the subdivision induced by $\omega$ has normal vector $\tilde{\sigma} = (-\sigma\,\, 1)^T$ for $\sigma$ a permutation; the endpoints $u_j, v_j \in \mathbb{R}^n$ of the corresponding edge for the polytope $C_{n,j}$ are
  \begin{align*}
    u_j(i) = 
    \begin{cases}
      2 \quad &\textrm{if $\sigma(i) > j$ and $i = j$}\\      
      1 \quad &\textrm{if $\sigma(i) > j$ and $i \neq j$}\\
      0 \quad &\textrm{if $\sigma(i) \leq j$}
    \end{cases} && \textrm{and} &&
    v_j(i) = 
    \begin{cases}
      2 \quad &\textrm{if $\sigma(i) \geq j$ and $i = j$}\\
      1 \quad &\textrm{if $\sigma(i) \geq j$ and $i\neq j$}\\
      0 \quad &\textrm{if $\sigma(i) < j$}.
    \end{cases}
  \end{align*}
  The volume of this mixed cell is $2^{\textrm{Fix}(\sigma)}$ where $\textrm{Fix}(\sigma)$ is the number of fixed points of $\sigma$.
\end{proposition}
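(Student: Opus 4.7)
The plan is to prove the proposition in two movements: first exhibit a mixed cell with the stated data for each permutation $\sigma\in S_n$, and then invoke \autoref{lem:CCelimBKK} to argue that no other mixed cells can exist.

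For the first movement, I would fix a permutation $\sigma$ and consider the candidate direction $\tilde{\sigma}=(-\sigma,1)^T$ in $\mathbb{R}^{n+1}$. For each $j$, the face of the lifted polytope $\widehat{C}_{n,j}$ selected by $\tilde{\sigma}$ is determined by the linear functional
$$
v\longmapsto \langle v,-\sigma\rangle+\omega_j(v)=\sum_{i=1}^{n}v_i\bigl(j-\sigma(i)\bigr),
$$
using $\omega_j(v)=j\sum_i v_i$ from \eqref{weight}. Since $C_{n,j}=[0,1]^{j-1}\times[0,2]\times[0,1]^{n-j}$ is an axis-aligned box, this functional can be minimized coordinate-wise. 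When $\sigma(i)<j$, the coefficient $j-\sigma(i)$ is positive, forcing $v_i=0$; when $\sigma(i)>j$, the coefficient is negative, forcing $v_i$ to its maximum value, which is $2$ if $i=j$ and $1$ otherwise. Because $\sigma$ is a permutation, exactly one index $i=\sigma^{-1}(j)$ satisfies $\sigma(i)=j$, and along this coordinate the functional is constant, producing an edge whose endpoints $u_j$ (at the lower value) and $v_j$ (at the upper value) match the formulas in the statement.

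Next I would compute the volume of the Minkowski sum of these edges. As $j$ ranges over $\{1,\dots,n\}$, the indices $\sigma^{-1}(j)$ also range over $\{1,\dots,n\}$, so precisely one edge appears in each coordinate direction $e_1,\dots,e_n$. The Minkowski sum is therefore a rectangular box, with side length $2$ in the direction $e_j$ whenever $\sigma^{-1}(j)=j$ (equivalently $\sigma(j)=j$) and side length $1$ otherwise. Multiplying, the volume equals $2^{\mathrm{Fix}(\sigma)}$.

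The final step is the global counting argument. Distinct permutations $\sigma\neq\sigma'$ produce distinct mixed cells, since some $j$ has $\sigma^{-1}(j)\neq(\sigma')^{-1}(j)$, making the $j$th summand an edge in a different coordinate direction. Summing the $n!$ volumes yields $\sum_{\sigma\in S_n}2^{\mathrm{Fix}(\sigma)}$, which by \autoref{lem:CCelimBKK} is exactly $\mathrm{MixedVolume}(C_{n,1},\dots,C_{n,n})$. Since the volumes of the mixed cells in any mixed subdivision sum to the mixed volume, no additional mixed cells can exist, and the characterization is complete. I anticipate the only mildly delicate point is confirming that $\tilde{\sigma}$ lies in the interior of the normal cone of the produced cell, but this is immediate because each face it selects is already one-dimensional by the coordinate-wise computation above.
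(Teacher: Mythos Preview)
Your proposal is correct and follows essentially the same strategy as the paper: for each $\sigma\in S_n$ you identify the face of each lifted $C_{n,j}$ selected by $\tilde\sigma$, compute the cell volume as $2^{\mathrm{Fix}(\sigma)}$, and then invoke \autoref{lem:CCelimBKK} to conclude that the volumes already sum to the mixed volume, so no further cells exist. Your coordinate-wise minimization on the box $C_{n,j}$ is in fact a cleaner derivation of the selected edge than the paper's verification that $\langle\tilde\sigma,\tilde u_j\rangle=\langle\tilde\sigma,\tilde v_j\rangle$, but the overall architecture is the same.
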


\begin{proof}
  Given a vector $v \in \mathbb{R}^n$, let $\Tilde{v} = (v^T \,\, \omega(v))^T$ denote the lift of $v$.
  We prove that given a permutation $\sigma \in S_n$, the vector $\Tilde{\sigma} = (-\sigma \,\, 1)^T$ is the normal vector of the Minkowski sum of edges $F_\sigma = (\Tilde{u}_1, \Tilde{v}_1) + \cdots + (\Tilde{u}_n, \Tilde{v}_n)$. 
  A point in $F_\sigma$ has the form $p(\alpha) = \sum_{j = 1}^n \alpha_j \Tilde{u}_j + (1 - \alpha_j) \Tilde{v}_j$ where $\alpha \in [0,1]^{n+1}$.
  The inner product of $p(\alpha)$ with $\tilde \sigma$ is $\langle p(\alpha), \Tilde{\sigma} \rangle =     \sum_{j = 1}^n \alpha_j \langle \Tilde{\sigma}, \Tilde{u}_j \rangle + (1 - \alpha_j)    \langle \Tilde{\sigma}, \Tilde{v}_j \rangle.$
  Since the inner products
  \begin{align*}
    \langle \Tilde{\sigma}, \Tilde{u}_j \rangle \! = \!\!\!\!
    \sum_{\sigma(i) > j} \!\! (j - \sigma(i)) +
    \Big \{j - \sigma(j) \,\, \textrm{if $\sigma(j) > j$}\Big \}\!
    =\!\!\!\!
    \sum_{\sigma(i) > j}\!\! (j - \sigma(i)) + 
    \Big \{j - \sigma(j) \,\, \textrm{if $\sigma(j) \geq j$}\Big \}    \!
    =\! \langle \Tilde{\sigma}, \Tilde{v}_j \rangle 
  \end{align*}
  are equal, the inner product $\langle p(\alpha), \tilde{\sigma} \rangle =     \sum_{j = 1}^n \alpha_j \langle \Tilde{\sigma}, \Tilde{u}_j \rangle + (1 - \alpha_j)    \langle \Tilde{\sigma}, \Tilde{v}_j \rangle = \sum_{j = 1}^n \langle \Tilde{\sigma}, \Tilde{u}_j \rangle$
  does not depend on $\alpha$.
  Therefore the linear form $\langle -, \Tilde{\sigma}\rangle$ is constant on $F_\sigma$ and is therefore a normal vector of $F_\sigma$.
  Furthermore,  $\langle -, \tilde \sigma \rangle$ is minimized on $F_\sigma$, because 
  $\langle p(\alpha), \tilde \sigma \rangle$ is negative and hence less than $\langle {\bf 0}, \tilde \sigma \rangle = 0$.
  Thus $\tilde \sigma$ is an inner normal vector. 
  Because the  last coordinate of $\Tilde{\sigma}$ is positive, the facet $F_\sigma$ is in the lower hull and therefore its projection is a mixed cell.

  The volume of this mixed cell is the product of the lengths of the edges $(u_1, v_1), \ldots, (u_n, v_n)$.
  An edge $(u_j, v_j)$ has length $2$ precisely when $\sigma(j) = j$ and length $1$ otherwise, so this mixed cell has volume $2^{{\rm Fix}(\sigma)}$.
  We have described $n!$ mixed cells indexed by permutations in $S_n$.
  Since the sum of the volumes of these mixed cells equals the mixed volume, these are all the mixed cells. 
\end{proof}
This explicit description of the mixed cells, paired with the ability to iterate over the symmetric group which indexes them, affords us the ability to create a polyhedral start solution iterator at a dramatically reduced cost; see \autoref{fig:hard-coded-iter}. 
\bigskip

\begin{figure}[!htpb]
  \includegraphics[scale=0.35]{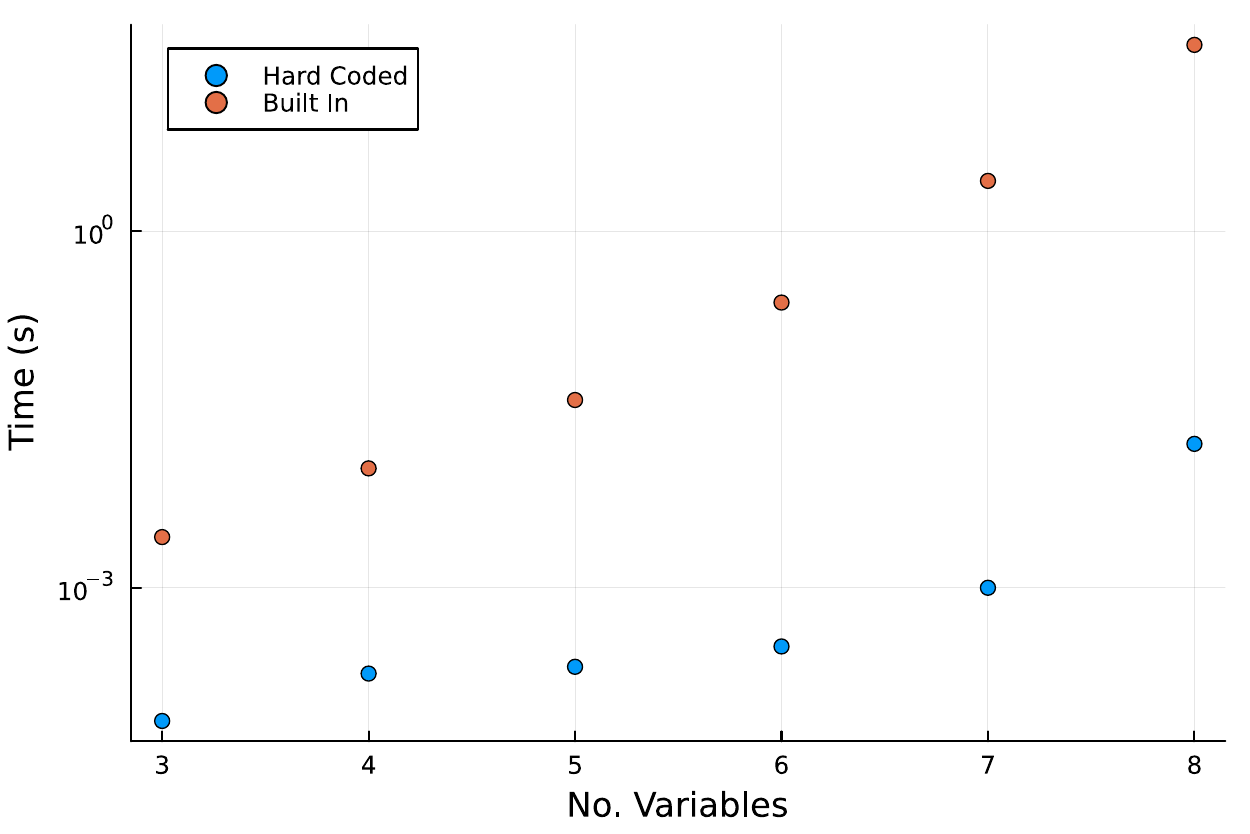}
  \includegraphics[scale=0.35]{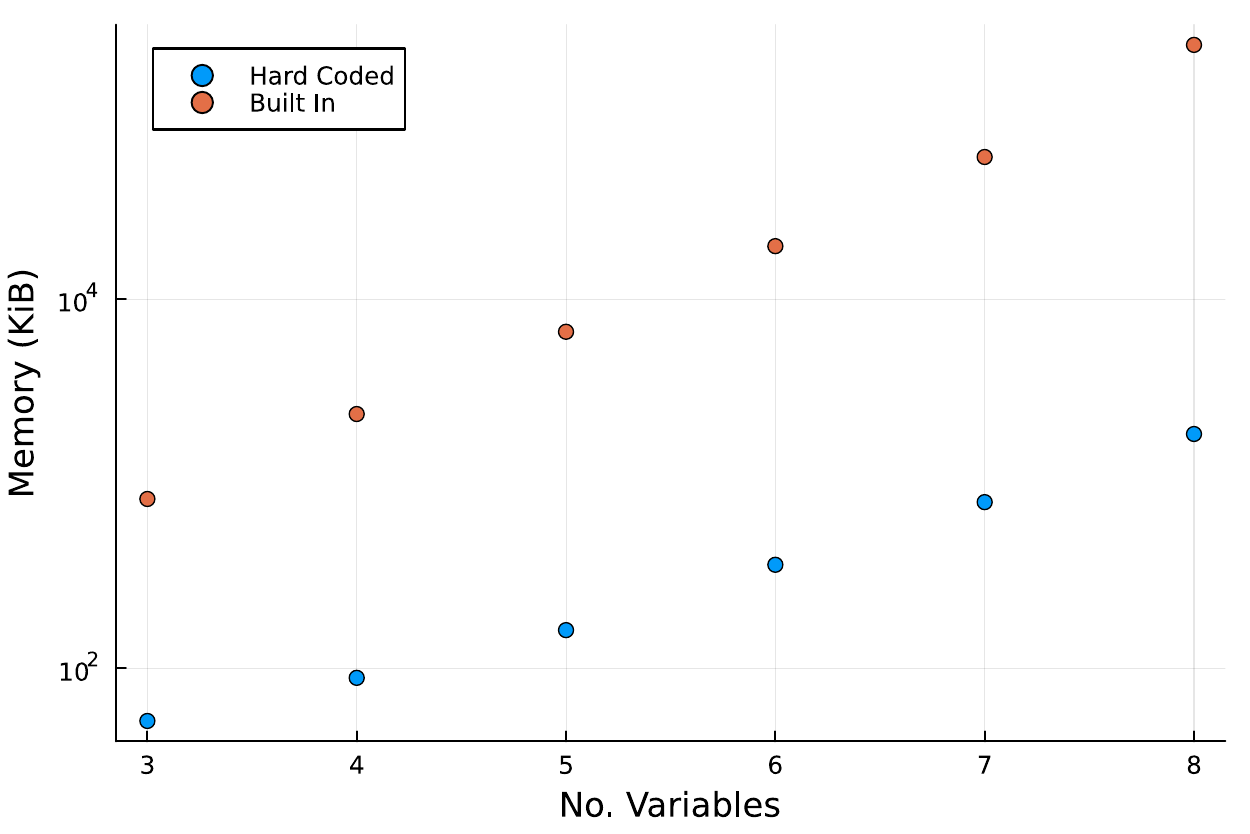}
  \caption{Runtime (left) and memory allocations (right) of setting up a \code{ResultIterator} for a system with random coefficients and support $C_{n,1}, \ldots, C_{n,n}$.}\label{fig:hard-coded-iter}
\end{figure}

We now show how to implement in \code{Julia} \cite{julia} the polyhedral start solution iterator presented in \autoref{sec:polyhedral_start_system} for the example in this section.
First, since we need to call multiple packages, we declare abbreviations for them.
\begin{lstlisting}[language = code]
using Combinatorics, LinearAlgebra, HomotopyContinuation, MixedSubdivisions
const LA = LinearAlgebra; const HC = HomotopyContinuation
const CB = Combinatorics; const MS = MixedSubdivisions
\end{lstlisting}

First, we define a function that computes the weights from \eqref{weight}. \code{HC.jl} uses \code{Int32} numbers for encoding weight vectors.
\begin{lstlisting}[language = code]
stretched_cube(n, i) = map(powerset(1:n)) do s 
   out = [convert(Int32, j in s) for j in 1:n] 
   out[i] = 2*out[i] 
   out
end
stretched_cubes(n) = map(i -> stretched_cube(n,i), 1:n)
weight_vector(n, i) = i .* map(v -> sum(v), stretched_cube(n, i))
weight_vectors(n) = map(i -> Int32.(weight_vector(n, i)), 1:n)
\end{lstlisting}

Next, we define a function that maps a permutation $\sigma$ into the corresponding mixed cell. 

\begin{lstlisting}[language = code]
function perm_to_segments(sigma, n)
    map(1:n) do j
        v1 = [Int(sigma[i] > j) for i in 1:n]; 
        v2 = [Int(sigma[i] >= j) for i in 1:n]
        v1[j] = 2*v1[j]; v2[j] = 2*v2[j]
        (v1, v2)
    end
end
function perm_to_mixedcell(sigma, cubes, n)
    nfix = count(i -> sigma[i] == i, 1:n)
    segm = perm_to_segments(sigma, n)
    indices = [(findfirst(==(segm[i][1]), cubes[i]), 
                findfirst(==(segm[i][2]), cubes[i])) for i in 1:n]
    augmented_segments = [([segm[i][1]; sum(segm[i][1]) * i], 
                            [segm[i][2]; sum(segm[i][2]) * i]) for i in 1:n]
    beta = [minimum([-sigma; 1]' * hcat(s...)) for s in augmented_segments]
    MS.MixedCell(indices, -sigma, beta, true, 2^nfix)   
end
\end{lstlisting}
Finally, we define an iterator for the mixed cells.
\begin{lstlisting}[language = code]
function mixed_cell_iterator(n)
    cubes = stretched_cubes(n)
    perms = CB.permutations(1:n)
    Iterators.map(sigma -> perm_to_mixedcell(sigma, cubes, n), perms)
end
\end{lstlisting}

Now that we have an iterator for the mixed cells, we are ready to implement a start solution iterator. \code{HC.jl} \cite{HC} provides an object called \code{PolyhedralStartSolutionsIterator}, which needs as input the support of the polynomial system, coefficients of a generic system with that support, the lifting and an iterator for mixed cells. We already defined the last two, so let us now implement the others. Our example code is for $n=5$. First we define the mixed cell iterator, the support of our problem and the lifting. 
\begin{lstlisting}[language = code]
n = 5
cells = mixed_cell_iterator(n)
support = map(s -> hcat(s...), stretched_cubes(n))
lifting = weight_vectors(n)
\end{lstlisting}
Suppose moreover, that the system we want to solve has coefficients \code{target_coeffs}. For instance, random real coefficients of the right size can be defined as follows.
\begin{lstlisting}[language = code]
target_coeffs = [randn(Float64, 2^n) for _ in 1:n]
\end{lstlisting}
Then, we define a generic system with the given support, and sample generic parameters \code{gen_coeffs} (to improve numerical stability, generic coefficients should be generated with the same magnitude as~\code{target_coeffs}. For clarity of exposition, we sample only random Gaussian numbers here),
\begin{lstlisting}[language = code]
@var x[1:n]
gen_coeffs = [randn(ComplexF64, 2^n) for _ in 1:n]
F = fixed(HC.polyhedral_system(support))
\end{lstlisting}
We are ready to define the \code{PolyhedralStartSolutionsIterator}:
\begin{lstlisting}[language = code]
iter = HC.PolyhedralStartSolutionsIterator(support, gen_coeffs, lifting, cells)
\end{lstlisting}
The last ingredient for solving is a \code{PolyhedralTracker} that declares the homotopy being used to push the start solutions in \code{PolyhedralStartSolutionsIterator} forward. In \code{HC.jl} \cite{HC} this is defined through a \code{ToricHomotopy} that tracks from the binomial start system to the generic system defined by \code{gen_coeffs}, and by a \code{CoefficientHomotopy} that tracks from that generic system to our target system defined by \code{target_coeffs}. All this information is saved in a \code{Solver} at the end:
\begin{lstlisting}[language = code]
H1 = HC.ToricHomotopy(F, gen_coeffs)
toric_tracker = Tracker(H1)
H2 = begin
    p = reduce(append!, gen_coeffs; init = ComplexF64[])
    q = reduce(append!, target_coeffs; init = ComplexF64[])
    HC.CoefficientHomotopy(F, p, q)
end;
gen_tracker = EndgameTracker(Tracker(H2))
tracker = HC.PolyhedralTracker(toric_tracker, gen_tracker, support, lifting)
S = Solver(tracker; start_system = :polyhedral)
\end{lstlisting}
This culminates in the definition of a \code{ResultIterator} encoding the corresponding homotopy iterator.
\begin{lstlisting}[language = code]
I = ResultIterator(iter, S)
\end{lstlisting}
As before, this iterator can be collected or otherwise manipulated. The fundamental strength of this construction is that the explicit definition of the mixed cell iterator let us set up the homotopy iterator without computing the start system explicitly.

\bibliographystyle{abbrvnat}
{\small
\bibliography{LowMemoryNumerics}}

\begin{thebibliography}{18}
\providecommand{\natexlab}[1]{#1}
\providecommand{\url}[1]{\texttt{#1}}
\expandafter\ifx\csname urlstyle\endcsname\relax
  \providecommand{\doi}[1]{doi: #1}\else
  \providecommand{\doi}{doi: \begingroup \urlstyle{rm}\Url}\fi

\bibitem[Bates et~al.()Bates, Hauenstein, Sommese, and Wampler]{Bertini}
D.~J. Bates, J.~D. Hauenstein, A.~J. Sommese, and C.~W. Wampler.
\newblock Bertini: Software for numerical algebraic geometry.
\newblock \url{https://bertini.nd.edu}.

\bibitem[Bezanson et~al.(2017)Bezanson, Edelman, Karpinski, and Shah]{julia}
J.~Bezanson, A.~Edelman, S.~Karpinski, and V.~B. Shah.
\newblock Julia: A fresh approach to numerical computing.
\newblock \emph{SIAM review}, 59\penalty0 (1):\penalty0 65--98, 2017.
\newblock URL \url{https://doi.org/10.1137/141000671}.

\bibitem[Borovik and Breiding(2025)]{BOROVIK2025102373}
V.~Borovik and P.~Breiding.
\newblock A short proof for the parameter continuation theorem.
\newblock \emph{Journal of Symbolic Computation}, 127:\penalty0 102373, 2025.
\newblock ISSN 0747-7171.
\newblock \doi{https://doi.org/10.1016/j.jsc.2024.102373}.
\newblock URL
  \url{https://www.sciencedirect.com/science/article/pii/S0747717124000774}.

\bibitem[Breiding and Timme(2018)]{HC}
P.~Breiding and S.~Timme.
\newblock {HomotopyContinuation.jl: A Package for Homotopy Continuation in
  Julia}.
\newblock In J.~H. Davenport, M.~Kauers, G.~Labahn, and J.~Urban, editors,
  \emph{Mathematical Software -- ICMS 2018}, pages 458--465. Springer
  International Publishing, 2018.

\bibitem[Brysiewicz(2021)]{Necklaces}
T.~Brysiewicz.
\newblock Necklaces count polynomial parametric osculants.
\newblock \emph{Journal of Symbolic Computation}, 103:\penalty0 95--107, 2021.
\newblock \doi{https://doi.org/10.1016/j.jsc.2019.11.002}.

\bibitem[Brysiewicz(2024)]{monodromyCoordinates}
T.~Brysiewicz.
\newblock Monodromy coordinates.
\newblock In K.~Buzzard, A.~Dickenstein, B.~Eick, A.~Leykin, and Y.~Ren,
  editors, \emph{Mathematical Software -- ICMS 2024}, pages 265--274, Cham,
  2024. Springer Nature Switzerland.

\bibitem[Brysiewicz and Burr()]{SparseTraceTest}
T.~Brysiewicz and M.~Burr.
\newblock Sparse trace tests.
\newblock \emph{Mathematics of Computation}, 92\penalty0 (344).

\bibitem[Chen et~al.(2014)Chen, Lee, and Li]{HOM4PS}
T.~Chen, T.-L. Lee, and T.-Y. Li.
\newblock {Hom4ps-3: A parallel numerical solver for systems of polynomial
  equations based on polyhedral homotopy continuation methods}.
\newblock In H.~Hong and C.~Yap, editors, \emph{Mathematical Software -- ICMS
  2014}, pages 183--190. Springer Berlin Heidelberg, Berlin, Heidelberg, 2014.

\bibitem[Duff et~al.(2018)Duff, Hill, Jensen, Lee, Leykin, and Sommars]{Duff}
T.~Duff, C.~Hill, A.~Jensen, K.~Lee, A.~Leykin, and J.~Sommars.
\newblock {Solving polynomial systems via homotopy continuation and monodromy}.
\newblock \emph{IMA Journal of Numerical Analysis}, 39\penalty0 (3):\penalty0
  1421--1446, 04 2018.
\newblock ISSN 0272-4979.
\newblock \doi{10.1093/imanum/dry017}.
\newblock URL \url{https://doi.org/10.1093/imanum/dry017}.

\bibitem[Huber and Sturmfels(1995)]{HS95}
B.~Huber and B.~Sturmfels.
\newblock A polyhedral method for solving sparse polynomial systems.
\newblock \emph{Mathematics of Computation}, 64:\penalty0 1541--1555, 1995.

\bibitem[Leykin()]{NAG4M2}
A.~Leykin.
\newblock {NAG4M2: Numerical Algebraic Geometry for Macaulay2}.
\newblock \url{https://people.math.gatech.edu/~aleykin3/NAG4M2}.

\bibitem[Leykin et~al.(2018)Leykin, Rodriguez, and Sottile]{TraceTest}
A.~Leykin, J.~I. Rodriguez, and F.~Sottile.
\newblock Trace test.
\newblock \emph{Arnold Mathematical Journal}, 4:\penalty0 113 -- 125, 2018.

\bibitem[Morgan and Sommese(1989)]{MS1989}
A.~P. Morgan and A.~J. Sommese.
\newblock Coefficient-parameter polynomial continuation.
\newblock \emph{Applied Mathematics and Computation}, 29\penalty0 (2):\penalty0
  123--160, 1989.
\newblock ISSN 0096-3003.
\newblock \doi{https://doi.org/10.1016/0096-3003(89)90099-4}.
\newblock URL
  \url{https://www.sciencedirect.com/science/article/pii/0096300389900994}.

\bibitem[Sloane()]{OEIS:A000522}
N.~J.~A. Sloane.
\newblock {The On-Line Encyclopedia of Integer Sequences, Sequence A000522}.
\newblock \url{https://oeis.org/A000522}.

\bibitem[Sommese and Wampler(2005)]{NAGbook}
A.~J. Sommese and C.~W. Wampler.
\newblock \emph{The Numerical Solution of Systems of Polynomials Arising in
  Engineering and Science}.
\newblock World Scientific, 2005.

\bibitem[Timme(2019)]{MixedSubdivisions}
S.~Timme.
\newblock {MixedSubdivisions.jl -- A Julia package for computing fine mixed
  subdivisions and mixed volumes}.
\newblock \url{https://github.com/saschatimme/MixedSubdivisions.jl}, 2019.

\bibitem[Verschelde(1999)]{PHC}
J.~Verschelde.
\newblock {Algorithm 795: PHCpack: A general-purpose solver for polynomial
  systems by homotopy continuation}.
\newblock \emph{ACM Transactions on Mathematical Software}, 25\penalty0
  (2):\penalty0 251--276, June 1999.

\bibitem[Watt(2006)]{Watt}
S.~M. Watt.
\newblock A technique for generic iteration and its optimization.
\newblock In \emph{Proceedings of the 2006 ACM SIGPLAN Workshop on Generic
  Programming}, WGP '06, page 76–86, New York, NY, USA, 2006. Association for
  Computing Machinery.
\newblock \doi{10.1145/1159861.1159872}.
\newblock URL \url{https://doi.org/10.1145/1159861.1159872}.

\end{thebibliography}
\end{document}